\def\co{\colon\thinspace}
\newcommand{\p}{\partial}
\newcommand{\N}{\mathbb N}
\newcommand{\Z}{\mathbb Z}
\newcommand{\C}{\mathbb C}
\newcommand{\CC}{\mathcal C}
\newcommand{\CP}{\mathbb{CP}}
\newcommand{\CPbar}{\overline{\mathbb{CP}}}
\newcommand{\R}{\mathbb R}
\newcommand{\SW}{\textrm{SW}}
\newcommand{\DT}{{\bf T}}
\newcommand{\T}{\mathcal T}
\newcommand{\Tmn}{\T_{mn}}
\newcommand{\zm}{\Z_m}
\newcommand{\zn}{\Z_n}
\newcommand{\la}{\langle}
\newcommand{\ra}{\rangle}
\def\wtX{\widetilde{X}\rule{0pt}{3.1mm}}
\def\tSigma{\widetilde{\Sigma}}
\def\spinc{\textrm{Spin}^c}
\newcommand{\sT}{\mathbb{T}}
\newcommand{\ssw}{\mathcal{SW}}
\newcommand{\ti}{\;\;\makebox[0pt]{$\top$}\makebox[0pt]{$\cap$}\;\;}
\renewcommand{\phi}{\varphi}
\newtheorem{theorem}{Theorem}[section]
\newtheorem{thm}{Theorem}
\newtheorem{lemma}[theorem]{Lemma}
\newtheorem{proposition}[theorem]{Proposition}
\newtheorem{corollary}[theorem]{Corollary}%
\theoremstyle{definition}
\newtheorem{definition}[theorem]{Definition}
\newtheorem{remark}[theorem]{Remark}
\newtheorem{example}[theorem]{Example}
\title{Double point surgery and configurations of surfaces}
\author[Hee Jung Kim]{Hee Jung Kim}
\address{Department of Mathematical Sciences\\
SUNY-Binghamton\\
Binghamton, NY 13902-6000\\
USA
}
\email{\rm{heekim@math.binghamton.edu}}
\author[Daniel Ruberman]{Daniel Ruberman}
\address{Department of Mathematics, MS 050\newline\indent Brandeis
University \newline\indent Waltham, MA 02454}
\email{\rm{ruberman@brandeis.edu}}
\dedicatory{
Dedicated, with respect and admiration, to Jos\'e Maria Montesinos-Amilibia on the occasion of his $65^{th}$ birthday.
}
\thanks{The first
author was partially supported by  the Max-Planck-Institut f\"ur
Mathematik in Bonn, Germany, and the second author was partially
supported by NSF Grant 0804760.}
\begin{document}
\maketitle

\begin{abstract}
We introduce a new operation, double point surgery, on immersed surfaces in a $4$--manifold, and use it to construct knotted configurations of surfaces in many $4$--manifolds.  Taking branched covers, we produce smoothly exotic actions of $\zm \oplus \zn$ on simply connected $4$--manifolds with complicated fixed-point sets.
\end{abstract}

\section{Introduction}
A configuration of surfaces~\cite{gilmer:thesis} is an immersion with isolated singularities, of a possibly disconnected closed surface $\Sigma$ in a $4$--manifold $X$.  In this paper we consider configurations whose components are embedded, either in the smooth or topological category.  We introduce {\em double point surgery}, a variation of  the Fintushel-Stern rim surgery~\cite{fs:surfaces,fs:addendum}, and use it to create configurations that are smoothly knotted, without changing the topological type or the smooth embedding type of the individual components of the configuration.   Rim surgery is the same as a knot surgery~\cite{fs:knots} along a torus lying in boundary of the tubular neighborhood of a circle embedded in a surface.  Double point surgery is a knot surgery (or more generally, a twisted knot surgery~\cite{kim:surfaces}) along a torus lying in a neighborhood of an intersection of the components.

Using twisted double point surgery, we will prove:
\begin{thm}\label{t:new-config}
There is a family of smooth $2$-component configurations $(X,\Sigma_1^{(n)} \cup \Sigma_2^{(n)})$ such that
\begin{enumerate}
\item\label{t:smooth} $(X,\Sigma_1^{(m)} \cup \Sigma_2^{(m)})$ and $(X,\Sigma_1^{(n)} \cup \Sigma_2^{(n)})$ are smoothly inequivalent for $m \neq n$.
\item\label{t:sigma1} $(X,\Sigma_1^{(m)})$ and $(X,\Sigma_1^{(n)})$ are smoothly equivalent for all $m,n$.
\item\label{t:sigma2} $(X,\Sigma_2^{(m)})$ and $(X,\Sigma_2^{(n)})$ are smoothly equivalent for all $m,n$.
\item\label{t:top} $(X,\Sigma_1^{(m)} \cup \Sigma_2^{(m)})$ and $(X,\Sigma_1^{(n)} \cup \Sigma_2^{(n)})$ are topologically equivalent for all $m,n$.
\end{enumerate}
\end{thm}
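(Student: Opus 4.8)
The plan is to produce the whole family by twisted double point surgery on a single model pair $(X,\Sigma_1\cup\Sigma_2)$, proving each of the four assertions at the level where it becomes visible: items \eqref{t:sigma1}, \eqref{t:sigma2} and \eqref{t:top} from the locality of the construction together with the topological classification of embedded surfaces, and the smooth inequivalence \eqref{t:smooth} only after passing to branched covers.

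\emph{The model.} Fix a simply connected $X$ with $b^{+}>1$ and non-trivial Seiberg--Witten invariant, and embedded surfaces $\Sigma_1,\Sigma_2$ meeting transversally in a single point $p$, chosen so that: (i) $\pi_1(X\setminus\Sigma_i)$ is cyclic and $\pi_1(X\setminus(\Sigma_1\cup\Sigma_2))$ is abelian, generated by the two meridians; and (ii) the $\zm\oplus\zn$--branched cover $Y$ of $X$ along $\Sigma_1\cup\Sigma_2$ (with $\zm$ over $\Sigma_1$ and $\zn$ over $\Sigma_2$) is a \emph{smooth} closed $4$--manifold with $b^{+}>1$ and non-trivial Seiberg--Witten invariants, in which the Clifford torus $T$ about $p$ lifts to a homologically essential, square-zero torus $\tilde T$ carrying a cusp (or fishtail) neighbourhood. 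Smoothness of $Y$ is automatic: near $p$ the local model is $\C^2$ and the cover is $(z_1,z_2)\mapsto(z_1^{n},z_2^{m})$, whose total space is again $\C^2$. Condition (ii) is arranged concretely by placing $\Sigma_1,\Sigma_2$ inside an algebraic surface — e.g.\ suitable sections or multisections of an elliptic surface — so that $Y$ is elliptic (or a knot-surgered elliptic surface) with $\tilde T$ a multiple of the fiber.

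\emph{The family, and items \eqref{t:sigma1}--\eqref{t:top}.} For knots $K_n$ with pairwise distinct Alexander polynomials (the twist knots suffice), let $(X,\Sigma_1^{(n)}\cup\Sigma_2^{(n)})$ be the result of twisted double point surgery at $p$ with companion $K_n$; by the definition recalled above this is (twisted) knot surgery along a torus in $\nu(p)$, which we take to be the Clifford torus $T$. Since $T$ is null-homologous and bounds a solid torus in $X$ whose core is a circle in $\Sigma_2$ that bounds a disk there, the standard rim-surgery observation shows that the ambient manifold does not change — hence the single $X$ throughout. The operation lives in the ball $\nu(p)$, in which each $\Sigma_i$ is one unknotted disk disjoint from $T$; remembering only $\Sigma_i$, the torus $T$ becomes a rim torus whose rim circle bounds a disk in $\Sigma_i$, so the surgery is trivial on $(\nu(p),\Sigma_i\cap\nu(p))$ and $(X,\Sigma_i^{(n)})$ is diffeomorphic to $(X,\Sigma_i)$ for all $n$, giving \eqref{t:sigma1} and \eqref{t:sigma2}. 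For \eqref{t:top}, knot surgery along $T$ does not alter $\pi_1$ of the complement of the configuration ($S^3\setminus\nu K_n$ is a homology circle normally generated by its meridian, and the twist is taken in the kernel of the peripheral map), so each $(X,\Sigma_1^{(n)}\cup\Sigma_2^{(n)})$ has the complement data of the model; the topological classification of surfaces and configurations with such fundamental groups then makes them all homeomorphic to the model, hence to each other.

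\emph{Item \eqref{t:smooth}.} A diffeomorphism of pairs lifts to a diffeomorphism of the $\zm\oplus\zn$--branched covers $Y^{(n)}$, so it suffices to see that the $Y^{(n)}$ are pairwise non-diffeomorphic. Over $\nu(p)$ the branch locus is disjoint from $T$, so the cover is unbranched there and the local knot-surgery model lifts faithfully: double point surgery along $T$ becomes honest knot surgery with companion $K_n$ along $\tilde T\subset Y$. As $\tilde T$ has a cusp neighbourhood, square zero, and $b^{+}(Y)>1$, the Fintushel--Stern formula gives $\ssw_{Y^{(n)}}=\ssw_{Y}\cdot\Delta_{K_n}\!\bigl(\exp(2[\tilde T])\bigr)$ up to normalization; since the $\Delta_{K_n}$ are distinct (and of increasing degree), the Seiberg--Witten invariants of the $Y^{(n)}$ — equivalently, their sets of basic classes — are pairwise distinct, so $Y^{(m)}\not\cong Y^{(n)}$ for $m\neq n$, and \eqref{t:smooth} follows.

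The main obstacle is the geometric bookkeeping in the model: one must simultaneously guarantee that $Y$ has non-trivial, computable Seiberg--Witten invariants, that the Clifford torus about $p$ lifts to a torus along which knot surgery genuinely changes them (so $\tilde T$ must be essential with a cusp neighbourhood, not merely null-homologous), and that the fundamental groups of all the relevant complements are controlled precisely enough to apply the topological classification. Checking that the lifted surgery is genuinely Fintushel--Stern knot surgery — the right $\pi_1$ and $b^{+}$, and not some twisted variant carrying a different invariant — is where the real work lies.
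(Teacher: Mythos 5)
Your route to item~\eqref{t:smooth} through the $\zm\oplus\zn$ branched covers breaks down at the lifting step. Since the meridian $\mu_1$ maps to a generator of $\zm$, the gluing identification sends $\mu_{K_n}$ to that same generator, so the restriction of the covering of $X-\Sigma_1^{(n)}\cup\Sigma_2^{(n)}$ to the glued-in piece $E(K_n)\times S^1$ is a \emph{nontrivial} covering that unwraps the $E(K_n)$ factor $m$ times; what is glued into the cover is (roughly) the $m$-fold cyclic cover of the knot exterior times a circle, not $E(K_n)\times S^1$ itself. Hence the lifted operation is not Fintushel--Stern knot surgery with companion $K_n$ along $\tilde T$, and the formula $\ssw_{Y^{(n)}}=\ssw_{Y}\cdot\Delta_{K_n}$ is unjustified. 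In fact the situation is worse than a missing computation: for the twisted surgeries with $(k,m)=1$ the paper shows (Section~\ref{S:actions}, the Claim in the proof of Theorem~\ref{T:exotic-action}) that all the $\zm\oplus\zn$ branched covers $\widetilde X_{K}$ are diffeomorphic to $\widetilde X$ --- that is precisely what makes the exotic group actions of Theorem~\ref{t:exotic-action} possible --- so the branched cover cannot distinguish the configurations. The paper instead works downstairs: it smooths the double points, shows the double point torus is a rim torus of the smoothed surface (Lemma~\ref{L:double=rim}), so double point surgery becomes rim surgery on the smoothed surface, and the \emph{relative} Seiberg--Witten invariant of the blown-up pair gets multiplied by $\Delta_{K_n}$ (Theorem~\ref{T:smoothclass}); this requires a symplectic configuration with at least two intersection points (so the rim curve is non-separating), which your single-intersection model does not provide.

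There are also gaps in \eqref{t:sigma1}--\eqref{t:top}. The local surgery is \emph{not} trivial on each component: with the gluing $\phi_k$ one component is unchanged, but the other becomes $\Sigma_1\sharp A(K_n,k)$, the connected sum with the $k$-twist spun knot (Corollary~\ref{embedding}); for $k=0$ this is the spun knot, which is knotted in $S^4$ whenever $K_n$ is nontrivial, so ``the rim circle bounds a disk in $\Sigma_i$, hence the surgery is trivial'' is false. One needs either $k=\pm1$ and Zeeman's theorem, or, in the $k=0$ case with $\Sigma_1=\CP^1\subset\CP^2$, Melvin's thesis together with Gluck's theorem, as in the paper's proof. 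For \eqref{t:top}, preservation of $\pi_1$ of the complement is not the homological triviality you invoke: gluing in $E(K_n)\times S^1$ introduces the knot group, and killing it (i.e.\ forcing $\mu_{K_n}$ to be central) requires the arithmetic conditions on $k$ of Proposition~\ref{P:fundamentalgp}. Finally, there is no off-the-shelf ``topological classification of configurations'' to quote; the paper has to prove Theorem~\ref{T:topclass} by surgery theory (a homotopy equivalence of complements, a normal cobordism, killing the $L^s_5(\Z)$ obstruction with punctured $E_8$-pieces when the group is $\Z$, and Freedman's s-cobordism theorem), and your argument needs some such statement to conclude.
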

\begin{remark}
There are several equivalence relations that one might consider among configurations: ambient diffeomorphism or homeomorphism, as well as the {\em a priori} stronger relations of smooth or topological ambient isotopy.  The statements in the preceding theorem refer to equivalence up to diffeomorphism or homeomorphism preserving the orientation of the ambient manifold.  One could also restrict to equivalences preserving orientations on the components of the configuration; see Section~\ref{S:smooth} for a discussion.   In the topological category, homeomorphisms can often be promoted to isotopies using the work of Perron~\cite{perron:isotopy2} and Quinn~\cite{quinn:isotopy}.  In the smooth category, we are generally able to prove equivalence up to diffeomorphism; it seems reasonable to expect that this could be strengthened to isotopy but we do not know how to provide such an improvement.
\end{remark}

In section~\ref{S:actions}, we apply the double point surgery to the construction of group actions, in the fashion of a recent paper of Fintushel-Stern-Sunukjian~\cite{fintushel-stern-sunukjian:actions}.
\begin{thm}\label{t:exotic-action}
Let $m$ and $n$ be relatively prime numbers. There is a
simply--connected $4$--manifold $\widetilde{X}$ supporting infinitely many
$\zm \oplus \zn$-actions that are smoothly inequivalent, but
topologically equivalent. 
\end{thm}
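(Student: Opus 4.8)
The plan is to carry out the branched--cover construction of Fintushel--Stern--Sunukjian~\cite{fintushel-stern-sunukjian:actions} using the two--component configurations produced by (twisted) double point surgery. Fix relatively prime $m,n$, discarding the trivial case $m=n=1$. One starts from an initial configuration $(X,\Sigma_1\cup\Sigma_2)$ to which the machine of Theorem~\ref{t:new-config} applies, chosen with the additional features that $X$ is simply connected, the classes $[\Sigma_1]$ and $[\Sigma_2]$ are divisible by $m$ and $n$ respectively in $H^2(X;\Z)$, the genera of $\Sigma_1$ and $\Sigma_2$ differ, and $\pi_1$ of the complement of the configuration is normally generated by the two meridians $\mu_1,\mu_2$. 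None of this obstructs the construction, since double point surgery is supported near an intersection point and leaves the ambient manifold, the homology classes, and the genera of the components unchanged; so one obtains the family $(X,\Sigma_1^{(k)}\cup\Sigma_2^{(k)})$, $k\in\N$, of Theorem~\ref{t:new-config}, with $[\Sigma_i^{(k)}]$ still divisible by $m$ or $n$. Divisibility provides, for each $k$, an epimorphism $\pi_1(X\setminus(\Sigma_1^{(k)}\cup\Sigma_2^{(k)}))\to\zm\oplus\zn$ carrying $\mu_1\mapsto(1,0)$, $\mu_2\mapsto(0,1)$, hence a branched covering $p_k\colon\widetilde{X}^{(k)}\to X$ with branch locus $\Sigma_1^{(k)}\cup\Sigma_2^{(k)}$ and branching indices $m$ along the first component and $n$ along the second. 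The deck group $\zm\oplus\zn$ (cyclic, as $\gcd(m,n)=1$) acts smoothly and locally linearly on $\widetilde{X}^{(k)}$; the lift $\widetilde{\Sigma}_i^{(k)}$ of $\Sigma_i^{(k)}$ is the fixed set of the $i$th cyclic summand, and the whole group fixes the preimages of the double points. Near a preimage of a double point the local model is the $(\zm\oplus\zn)$--branched cover of $(\C^2,\{z_1z_2=0\})$, namely $(\C^2,\{w_1w_2=0\})$ under $(w_1,w_2)\mapsto(w_1^m,w_2^n)$, and the $(\zm\oplus\zn)$--branched cover of the linking sphere $(S^3,\text{Hopf link})$ is again $S^3$, so the local covers are standard balls.

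The central point is that $\widetilde{X}^{(k)}$ is, up to diffeomorphism, independent of $k$; write $\widetilde{X}$ for it. Realize a double point surgery with knot $K$ as a twisted knot surgery along the double point torus $T=\{|z_1|=\epsilon_1,\ |z_2|=\epsilon_2\}$ in the local model around an intersection point; by construction this modifies the configuration but not the ambient $4$--manifold --- the property already used to keep $X$ fixed in Theorem~\ref{t:new-config}. The torus $T$ is disjoint from the branch locus and $\pi_1(T)$ is generated by $\mu_1,\mu_2$, which surject onto $\zm\oplus\zn$; hence the preimage of $T$ is a single torus $\widetilde{T}$, and in the local model it is $\{|w_1|=\delta_1,\ |w_2|=\delta_2\}$, a double point torus for the lifted configuration $\widetilde{\Sigma}_1^{(k)}\cup\widetilde{\Sigma}_2^{(k)}$. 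Thus $\widetilde{X}^{(k)}$ is obtained from $\widetilde{X}^{(0)}$ by the branched--cover analogue of the surgery along $\widetilde{T}$ --- again a twisted double point surgery for the lifted configuration, supported in a ball around a lifted intersection point --- whence $\widetilde{X}^{(k)}\cong\widetilde{X}^{(0)}=:\widetilde{X}$. That $\widetilde{X}$ is simply connected then follows from a van Kampen computation for $\widetilde{X}^{(0)}$: decomposing it as the branched cover of $X\setminus\nu(\Sigma_1\cup\Sigma_2)$ with the tubular neighbourhoods of $\widetilde{\Sigma}_1,\widetilde{\Sigma}_2$ glued back in, $\pi_1(\widetilde{X}^{(0)})$ is a quotient of a subgroup of $\pi_1(X\setminus(\Sigma_1\cup\Sigma_2))$ in which the meridians are killed, hence trivial by the choice of initial configuration.

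It remains to compare the $(\zm\oplus\zn)$--actions on $\widetilde{X}$, one for each $k$. Topological equivalence is inherited from Theorem~\ref{t:new-config}\eqref{t:top}: an orientation--preserving homeomorphism $\bar H$ of $X$ carrying $\Sigma_1^{(k)}\cup\Sigma_2^{(k)}$ to $\Sigma_1^{(k')}\cup\Sigma_2^{(k')}$ must send $\Sigma_i^{(k)}$ to $\Sigma_i^{(k')}$ (the components have different genus), so it matches the meridians and the monodromy homomorphisms and therefore lifts to an equivariant homeomorphism $\widetilde{X}^{(k)}\to\widetilde{X}^{(k')}$. For smooth inequivalence, suppose for some $k\neq k'$ there were an equivariant diffeomorphism $F\colon\widetilde{X}^{(k)}\to\widetilde{X}^{(k')}$. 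It descends to a self--diffeomorphism $\bar F$ of $X=\widetilde{X}/(\zm\oplus\zn)$ taking the branch locus of $p_k$ to that of $p_{k'}$, and again by the genus condition $\bar F(\Sigma_i^{(k)})=\Sigma_i^{(k')}$; this exhibits $(X,\Sigma_1^{(k)}\cup\Sigma_2^{(k)})$ and $(X,\Sigma_1^{(k')}\cup\Sigma_2^{(k')})$ as smoothly equivalent, contradicting Theorem~\ref{t:new-config}\eqref{t:smooth}. Since Theorem~\ref{t:new-config} supplies infinitely many pairwise inequivalent configurations, passing to branched covers yields infinitely many pairwise smoothly inequivalent, topologically equivalent $(\zm\oplus\zn)$--actions on the single manifold $\widetilde{X}$.

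I expect the main obstacle to be the invariance statement of the second paragraph: that double point surgery is compatible with passing to the cyclic branched cover, with the induced operation upstairs again an ambient--trivial (twisted) double point surgery along the lifted double point torus. This is really a statement about the local model near an intersection point --- the branched cover of $(B^4,\text{knotted crossing})$ agrees with the standard ball --- and once it is established the remaining ingredients (the branched--cover bookkeeping, the van Kampen computation, the descent to the quotient, and the verification that the initial configuration can be chosen with the listed properties) are routine.
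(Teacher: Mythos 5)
Your overall architecture (branched cover of the quotient configuration, descent of an equivariant diffeomorphism to distinguish actions via Theorem~\ref{t:new-config}, lifting the homeomorphism for topological equivalence) matches the paper, but the step you yourself flag as the main obstacle --- that $\wtX^{(k)}$ is independent of $k$ --- is exactly where your argument breaks, and the mechanism you propose for it is not correct. You assert that the preimage of the double point torus is a single torus $\widetilde{\DT}$ and that the induced operation upstairs is ``again an ambient--trivial twisted double point surgery,'' hence does not change the total space. But the gluing piece upstairs is the induced $mn$--fold cover of $E(K)\times S^1$ determined by $\mu_K\mapsto(1,0)$, $[S^1]\mapsto(-k,1)$ in $\zm\oplus\zn$; in the $\mu_K$--direction this unwraps to the $m$--fold cyclic cover of $E(K)$, which is not a knot exterior, so the lifted operation is not a double point surgery at all, and its ambient--triviality is precisely what has to be proved rather than quoted. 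Indeed the statement you want --- that the $\zm\oplus\zn$ branched cover of the locally knotted ball pair $(B^4,(B_1\cup B_2)_{K,A(k)})$ is again a standard $B^4$ rel boundary --- is \emph{false} without an arithmetic condition: for the untwisted surgery ($k=0$) the relevant cover contains the spin of the $m$--fold branched cover of $(S^3,K)$, which is not $S^4$ in general. So ``local model, hence routine'' cannot be the whole story; the twist parameter $k$ and the condition $\gcd(k,m)=1$ are essential.

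The paper closes this gap by factoring the cover as in~\eqref{br-diagram}: first the $\zn$--cover $p_n$ branched only over $\Sigma_2$, for which $\mu_K\mapsto 0$ and $[S^1]\mapsto$ a generator, so the surgery genuinely does lift to a $k$--twisted double point surgery on $(\wtX^n,p_n^{-1}(\Sigma))$; by Proposition~\ref{P:embedd-plotnick} and Corollary~\ref{embedding} this replaces the branch component $p_n^{-1}(\Sigma_1)$ by $p_n^{-1}(\Sigma_1)\sharp A(K,k)$, the connected sum with the $k$--twist spin of $K$, while leaving $\wtX^n$ unchanged. The second stage, the $\zm$--cover branched over this new surface, then changes $\wtX$ by connected sum with the $m$--fold cyclic branched cover of $(S^4,A(K,k))$, and the decisive input is Plotnick's Corollary~6.1: this cover is $S^4$ precisely when $\gcd(m,k)=1$ (which is why the proof of Theorem~\ref{t:new-config} takes $k=\pm1$ in case (iii)). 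Your proposal contains no substitute for this twist--spin/branched--cover theorem, so as written the claim $\wtX^{(k)}\cong\wtX^{(0)}$ is unsupported. (Secondary points: your extra hypotheses --- divisibility of the classes and distinct genera --- are not needed, and the distinct--genus requirement is not even satisfied by the paper's example in Proposition~\ref{P:tori}, where both components are tori; a diffeomorphism of pairs need not preserve components for the contradiction with Theorem~\ref{T:smoothclass}, and simple connectivity of $\wtX$ is cleanest when $\pi_1(X-\Sigma)\cong\zm\oplus\zn$ exactly, as in Proposition~\ref{P:tori}, rather than merely surjecting onto it.)
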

Since $(m,n) = 1$, the group $\zm \oplus \zn$ is isomorphic to $\Z_{mn}$; writing the group as $\zm \oplus \zn$ is convenient for keeping track of the isotropy subgroups of the action.  In particular, since the singular set consists of a configuration, rather than an embedded surface, these examples are different from those presented in~\cite{fintushel-stern-sunukjian:actions}. In the terminology of group actions~\cite{bredon}, the actions constructed in~\cite{fintushel-stern-sunukjian:actions} are semi-free, whereas ours are not.
\\[1ex]
{\bf Conventions:}
Let $\Sigma$ be a surface that is the union of finitely many components $\Sigma_1,\ldots,\Sigma_n$, and let $f \co \Sigma \to X$ be an immersion.  Then $\Sigma$ will also denote the image of $f$, and the images of the $\Sigma_i$ will be referred to as the components of the configuration.   $\nu(\Sigma)$ will denote a regular neighborhood of a configuration, or a tubular neighborhood of a surface.     We will refer to the fundamental group of the complement as the group of the configuration.  All surfaces will be oriented, and a meridian will mean an oriented meridian.  A torus in a $4$--manifold on which a torus surgery is to be performed will be written in bold $\DT$, and other tori will be written in a normal typeface $T$.
\\[1ex]
\textbf{Acknowledgement.} We thank Inan\c{c} Baykur for his help with the details of the construction in Proposition~\ref{P:tori}, and Paul Melvin for some helpful correspondence.
\section{Double point surgery}\label{S:dsurgery}
Let us first recall the Fintushel-Stern knot surgery~\cite{fs:knots} along a torus $\DT$ embedded with trivial normal bundle in a $4$--manifold $X$.  Given a knot $K$ in $S^3$, we perform Fintushel-Stern knot surgery along $\DT$ by removing a neighborhood of $\DT$ and gluing the exterior $E(K)$ of $K$ times with $S^1$ via a diffeomorphism $\phi$ of $T^3$:
\[X_{K,\phi} = X_K=X-\DT \times D^2\cup_{\phi} E(K)\times S^1.\]
We will always assume that the gluing map $\phi$ has the following two properties:
\begin{enumerate}
\item $\phi$ identifies a meridian $ \p D^2$ of $\DT$ with a longitude $\lambda_K$ of $K$.
\item $\phi$ is {\em local} in the sense that for $\DT$ standardly embedded in $B^4$, there is a diffeomorphism of $B^4_{K,\phi}$ with $B^4$ that is the identity on the boundary.
\end{enumerate}
As described below, work of Plotnick~\cite{plotnick:fibered} gives many examples of local gluing maps.
As in~\cite{fs:surfaces}, the image of a surface $\Sigma$ embedded in $X - \DT$ under a diffeomorphism $X_K \to X$ may well be different from $\Sigma$. We will denote a surface constructed in this fashion by $(X,\Sigma_{K,\phi})$ or $(X,\Sigma_K)$ if $\phi$ is unspecified.

A local model for a transverse intersection of surfaces in a $4$--manifold is the intersection of the coordinate axes in $\C^2$.  The {\em double point torus} $\DT$ (linking torus, in the terminology of~\cite{freedman-quinn}) is the set
\[\{(z,w)\; | \; |z| = |w| = 1/2 \}  \subset \mathit{int}(B^4) \subset \C^2.
\]
Let $K$ be a knot in $S^3$ and let $\phi$ be a local gluing map.  Let $B_1, B_2$ be the unit disks in the two coordinate axes; then we can write $\DT =\mu_{1}\times \mu_{2}$ where $\mu_i$ is a meridian of $B_i$.
\begin{definition}\label{d:surgery}
Let $K$ be a knot in $S^3$.  A local double point surgery with local gluing map $\phi$ replaces  $(B^4,B_1 \cup B_2)$ by $(B^4,(B_{1}\cup B_{2})_{K,\phi})$.
\end{definition}
By assumption, the boundary (which is $S^3$, containing a Hopf link) is unaffected by this operation.  Hence we can do a local double point surgery in a neighborhood of a transverse intersection of surfaces; the result will be called a double point surgery on a configuration.

We will now consider double point surgery in greater detail.  Let $X$ be an oriented simply--connected $4$--manifold and let $\Sigma$ be a configuration in $X$. For convenience, we may assume that the configuration $\Sigma$  has two components $\Sigma_1$ and $\Sigma_2$, and the surgery is being done at an intersection point of $\Sigma_1$ and $\Sigma_2$.
Because the double point surgery operation is local, it may be written as follows.  Denoting by $(X',\Sigma')$ the complement of the $4$-ball neighborhood
$(B^4, B_1 \cup B_2)$ of a double point in $(X,\Sigma)$, we have
\begin{equation}\label{dbleptsurgery}
(X,\Sigma_K)=(X',\Sigma')\cup_{(S^3,\p B_1\cup \p B_2)}(B^4, (B_1 \cup B_2)_{K}).
\end{equation}

A useful observation is that local double point surgery is
closely related to surgery on twins, introduced by
Montesinos~\cite{montesinos:twins.I,montesinos:twins.II} and
Plotnick~\cite{plotnick:fibered}, described as follows. Let $P$ be a
plumbing at two points (with opposite signs) of two copies of $S^2\times D^2$. The pair
$S_1,\ S_2$ of cores of the copies of $S^2\times D^2$ is called a
`twin' by Montesinos~\cite{montesinos:twins.I,montesinos:twins.II}.  It has a standard embedding in $S^4$, with each component an unknotted $2$-sphere.  
Note that $\partial P$ is a $3$-torus, and we choose a basis $\{e_1,
e_2, e_3\}$ of $H_1(\p P)$ as follows. Let $e_1$ be the homology
class of the meridian $\mu_1$ of $S_1$, $e_2$ the homology class of the
meridian $\mu_2$ of $S_2$, and $e_3$ the homology class of a canonical curve
on $\partial P$ which generates $H_1(P)$. We note that $S^4 = P\cup_{\p}  \DT \times D^2$ where $\DT$ is a torus of the
form $\mu_1\times \mu_2$. Plotnick modified the pair $(S^4, P)$ by gluing
$E(K)\times S^1$ to $P$ using an identification of $\p P$ with $\p
E(K)\times S^1$, which is encoded by a matrix $A$ with respect to
the ordered basis $\{e_1, e_2, e_3\}$ of $H_1(\p P)$ and an ordered
basis $\{ \mu_{K}, S^1, \lambda_K\}$ of $H_1(\p E(K)\times S^1)$.
So, we have
\begin{equation}\label{plotnick1}
(S_1\cup S_2)_{K,A} \subset P\cup_{A} E(K)\times S^1
\end{equation}
where the general form of $A$ is
\begin{equation}\label{matrix1}
   \begin{pmatrix}
      p&k&0\\
      -\gamma&\beta&0\\
      -\alpha\gamma+bp&\alpha\beta+bk&1
\end{pmatrix}  \hspace{1cm}           p\beta+k\gamma=1.
\end{equation}

For some choices of the parameters (see~\cite[Corollary 6.1]{plotnick:fibered}), the gluing map is local and the construction produces $(S^4,(S_1\cup S_2)_{K,A})$. The torus $\DT$ is exactly the double point torus for $S_1 \cup S_2$ near one of the double points of the twin.   This implies:
\begin{proposition}\label{P:embedd-plotnick} The double point surgered pair $(B^4, (B_1 \cup B_2)_{K})$ is diffeomorphic
to the punctured pair $(S^4, (S_1\cup S_2)_{K,A})$.
\end{proposition}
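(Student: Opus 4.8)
The plan is to recognize Plotnick's modification $P\cup_A E(K)\times S^1$ as a knot surgery on $S^4$ performed along $\DT$, supported in a ball about one double point of the twin, and then to delete a ball about the \emph{other} double point. Since $S^4 = P\cup_\partial \DT\times D^2$, the plumbing $P$ is nothing but the exterior $S^4\setminus(\DT\times D^2)$, so $P\cup_A E(K)\times S^1$ is by definition the knot surgery $S^4_{K,\phi}$ for the gluing map $\phi$ of $T^3=\partial P$ determined by $A$. Reading off the matrix~\eqref{matrix1} in the bases $\{e_1,e_2,e_3\}$ and $\{\mu_K,S^1,\lambda_K\}$, its third column $(0,0,1)$ says that the canonical curve $e_3$ generating $H_1(P)$ — which is the meridian $\partial D^2$ of $\DT$ — is carried to $\lambda_K$; thus $\phi$ satisfies condition (1) of Section~\ref{S:dsurgery}, and the hypothesis that $A$ is one of Plotnick's matrices for which the gluing is local \cite[Corollary~6.1]{plotnick:fibered} is exactly the statement that $\phi$ also satisfies condition (2). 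Hence $P\cup_A E(K)\times S^1\cong S^4$ and the construction yields $(S^4,(S_1\cup S_2)_{K,A})$.

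Next I would localize the surgery. By hypothesis $\DT$ is the double point torus near one double point $p$ of the twin, and $\DT=\{|z|=|w|=1/2\}\subset\operatorname{int}(B^4)$ lies in the interior of a standard ball neighborhood $N_p\cong(B^4,B_1\cup B_2)$ of $p$; so does a tubular neighborhood $\DT\times D^2$, and the surgery changes nothing outside $N_p$. Using locality of $\phi$ to identify the surgered $N_p$ with $(B^4,(B_1\cup B_2)_{K,\phi})$, we get
\[
(S^4,(S_1\cup S_2)_{K,A}) \;=\; \bigl(B^4,(B_1\cup B_2)_{K,\phi}\bigr)\ \cup_{(S^3,\,\mathrm{Hopf})}\ \bigl(S^4\setminus\operatorname{int}N_p,\ (S_1\cup S_2)\setminus\operatorname{int}N_p\bigr).
\]
Here I invoke the description of the standard twin as the double of the standard transverse-disk pair $(B^4,B_1\cup B_2)$ along $(S^3,\mathrm{Hopf\ link})$ — one ball model for each of the two double points, the opposite local signs at $p$ and at the other double point $q$ arising from the orientation reversal in the doubling. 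Consequently $S^4\setminus\operatorname{int}N_p$, together with the rest of the twin, is diffeomorphic as a pair to a standard ball neighborhood $N_q\cong(B^4,B_1\cup B_2)$ of $q$.

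Finally I would puncture at $q$: removing the interior of a smaller ball neighborhood $N_q'\subset\operatorname{int}N_q$ of $q$ deletes a disk about $q$ from each sheet, and $(N_q,B_1\cup B_2)\setminus\operatorname{int}N_q'\cong(S^3\times I,\ \mathrm{Hopf}\times I)$ is a product collar. Attaching such a collar to $\bigl(B^4,(B_1\cup B_2)_{K,\phi}\bigr)$ does not change its diffeomorphism type, so
\[
\bigl(S^4,(S_1\cup S_2)_{K,A}\bigr)\setminus\operatorname{int}N_q' \;\cong\; \bigl(B^4,(B_1\cup B_2)_{K,\phi}\bigr)\;=\;\bigl(B^4,(B_1\cup B_2)_K\bigr),
\]
which is the assertion. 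The step requiring care is the first one: matching the integer matrix $A$ of~\eqref{matrix1} with an honest local gluing map $\phi$ in the sense of Section~\ref{S:dsurgery}, i.e.\ keeping track of the two bases and verifying that Plotnick's notion of ``local'' agrees with the one used here; once that is in place, the localization of the surgery and the collar absorption are routine cut-and-paste arguments given the structure of the standard twin as a double.
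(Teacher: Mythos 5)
Your proof is correct and amounts to the same argument the paper leaves implicit: the paper deduces the proposition from the single observation that $\DT$ is the double point torus at one of the twin's double points, and you simply fill in the details by recasting Plotnick's regluing as a knot surgery localized in a ball about that double point (matching the matrix $A$ with a local gluing $\phi$ in the stated bases) and then puncturing at the other double point, where the standard twin is the standard transverse disk pair, absorbing the resulting collar. No gap to report.
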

The decomposition of $(X,\Sigma_K)$ in~\eqref{dbleptsurgery} and
Proposition~\ref{P:embedd-plotnick} thus allow us to write $(X,\Sigma_K)$ as
a connected sum;
\begin{equation}\label{E:dsurgery-twin}
(X,\Sigma_K)=(X,\Sigma)\sharp (S^4,(S_1\cup S_2)_{K,A}).
\end{equation}

By analogy with `twist-rim surgery' in~\cite{kim:surfaces}, we are particularly interested in 
double point surgery performed using the gluing map $\phi_k$ defined
by
\begin{equation}\label{twistbleptsurgery}
\phi_{k*}(\mu_1)=\mu_K,\  \phi_{k*}(\mu_2)=k\mu_K+[S^1],\ 
\text{and}\ \phi_{k*}[\p D^2]=\lambda_K.
\end{equation}
Locally, this corresponds to the knotted twin described by the matrix
\begin{equation}\label{E:matrix2}
A(k)=\begin{pmatrix}
         1&k&0\\
         0&1&0\\
         0&0&1
   \end{pmatrix}
\end{equation}
which is in fact a local gluing.  In $S^4$, the sphere $S_1$ becomes Zeeman's~\cite{zeeman:twist} $k$-twist spin of $K$, denoted $A(K,k)$, while the sphere $S_2$ is unaffected by the surgery.
We refer to this surgery as a `$k$-twisted double point surgery'.

Let us denote by $\Sigma_{1,K}$ and $\Sigma_{2,K}$ the components of
the new configuration $\Sigma_K$.  According to proposition~\ref{P:embedd-plotnick} and equation~\eqref{E:dsurgery-twin}, the result of $k$-twisted double point surgery on the configuration $\Sigma$ is given as follows.
\begin{corollary}\label{embedding}
Suppose that  $(X, \Sigma_K)$ is the pair obtained by a $k$-twisted
double point surgery. Then the embedding of $\Sigma_{1,K}$ is
diffeomorphic to $\Sigma_1\sharp A(K,k)$ whereas the embedding of
$\Sigma_{2,K}$ is equal to $ \Sigma_2$.
\end{corollary}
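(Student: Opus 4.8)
The plan is to read both statements off the connected-sum decomposition in~\eqref{E:dsurgery-twin}; the only real work is to keep track of which sheet of the local twin is glued to which component of $\Sigma$.

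First I would pin down the bookkeeping near the double point. The surgery $4$-ball $B^4\subset X$ meets $\Sigma$ in the two trivial disks $B_1\subset\Sigma_1$ and $B_2\subset\Sigma_2$ through the chosen intersection point, with $\DT=\mu_1\times\mu_2$ and $S^3=\partial B^4$ meeting $\Sigma$ in the Hopf link $\partial B_1\cup\partial B_2$. By~\eqref{dbleptsurgery}, $(X,\Sigma_K)$ is obtained by gluing the surgered ball pair $(B^4,(B_1\cup B_2)_K)$ back onto $(X',\Sigma')$, and by Proposition~\ref{P:embedd-plotnick} that ball pair is $(S^4,(S_1\cup S_2)_{K,A})$ with a neighborhood of the remaining, unsurgered double point of the twin removed; such a neighborhood is itself a trivial ball pair whose boundary Hopf link matches $\partial B_1\cup\partial B_2$. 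Hence~\eqref{E:dsurgery-twin} genuinely presents $(X,\Sigma_K)$ as a connected sum of $(X,\Sigma)$ with $(S^4,(S_1\cup S_2)_{K,A})$ along a ball that meets both components, in which the trivial disk $B_i\subset\Sigma_i$ is excised and replaced by $S_i$ (after the surgery $A$) with a trivial disk removed. In particular $\Sigma_{i,K}$ is the componentwise connected sum $\Sigma_i\sharp S_i$, with $S_i$ regarded as a $2$-sphere in $S^4$ via the twin embedding surgered by $A$.

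For the $k$-twisted surgery the gluing map is $\phi_k$ of~\eqref{twistbleptsurgery}, with matrix $A(k)$ from~\eqref{E:matrix2}, and --- as recorded immediately before the statement --- for this matrix the surgered $S_1$ is Zeeman's $k$-twist spin $A(K,k)$, while $S_2$ is left the standardly embedded unknotted $2$-sphere. Therefore $\Sigma_{1,K}=\Sigma_1\sharp A(K,k)$. For the second component, $\Sigma_{2,K}=\Sigma_2\sharp S_2$ where $S_2$ is an unknotted $2$-sphere in $S^4$; since an unknotted $2$-sphere minus a trivial disk is again a trivial disk, gluing it back in place of $B_2$ returns $(X,\Sigma_2)$ up to diffeomorphism, so $\Sigma_{2,K}=\Sigma_2$. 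The orientation conventions of the Remark cause no difficulty, as $A(K,k)$ and the unknotted sphere carry their standard orientations.

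The one point that genuinely needs checking --- beyond citing~\eqref{E:dsurgery-twin} --- is that the diffeomorphism of Proposition~\ref{P:embedd-plotnick} respects the labels of the two local sheets: that it carries the disk on $\Sigma_1$ to the punctured $S_1$ (the one that gets twist-spun) and the disk on $\Sigma_2$ to the punctured $S_2$, rather than interchanging them. I expect this to be the only obstacle. It is settled by tracing the asymmetry of $\phi_k$ in~\eqref{twistbleptsurgery} --- namely that $\mu_1\mapsto\mu_K$ is the meridian of the sheet surgered to $A(K,k)$, whereas $\mu_2\mapsto k\mu_K+[S^1]$ belongs to the sheet left alone --- through the fixed identification $\DT=\mu_1\times\mu_2$ of Section~\ref{S:dsurgery} and the ordering $\{e_1,e_2,e_3\}$ of $H_1(\partial P)$ used to write $A(k)$ in~\eqref{E:matrix2}. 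Once the labels are matched, both assertions follow as above.
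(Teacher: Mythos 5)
Your proposal is correct and follows essentially the same route as the paper: the corollary is read off from the connected-sum decomposition~\eqref{E:dsurgery-twin} together with Proposition~\ref{P:embedd-plotnick} and the stated fact that for the gluing matrix $A(k)$ the sphere $S_1$ becomes the $k$-twist spin $A(K,k)$ while $S_2$ is unaffected, so that summing $\Sigma_1$ with $A(K,k)$ and $\Sigma_2$ with an unknotted sphere gives the claim. Your additional check that the sheet labels are not interchanged, traced through the asymmetry of $\phi_k$ in~\eqref{twistbleptsurgery} and the ordered basis used for $A(k)$, is consistent with the paper's conventions and only makes explicit what the paper leaves implicit.
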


\section{Fundamental Group}\label{S:pi1}

In this section, we study the effect of double point surgery on the group of a configuration $\Sigma$ in a simply--connected $4$--manifold $X$. From the decomposition of~\eqref{dbleptsurgery}, the complement of the configuration in $X$
can be written
\begin{equation}\label{knotsurgerycompl}
X-\Sigma_{K}=(X'-\Sigma')\cup_{T\times I}(B^4-(B_1 \cup B_2)_{K}).
\end{equation}
Here $T\subset \partial (B^4-(B_1 \cup B_2)_{K}) $ is parallel to the double point torus $\DT$.
Since $\pi_1(X'-\Sigma')$ is isomorphic to $\pi_1(X-\Sigma)$, we
need only compute the group of the nonstandard
component in this decomposition.
\begin{lemma}\label{L:fungp1} For any gluing $\phi$ and any knot $K$,
$\pi_1(B^4-(B_1 \cup B_2)_{K})$ is isomorphic to $\pi_1(E(K)\times S^1)$.
\end{lemma}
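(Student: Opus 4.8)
The plan is to read both fundamental groups off an explicit product decomposition of the complement of the Hopf link in $B^4$, and then match them with a single application of van Kampen's theorem. Writing a point of $B^4\subset\C^2$ in the form $(z,w)=(r\sqrt t\,e^{i\alpha},\,r\sqrt{1-t}\,e^{i\beta})$ with $r=|(z,w)|$, the correspondence $(r,t,\alpha,\beta)\mapsto(z,w)$ is a diffeomorphism from $R\times T^2$ onto $B^4-(B_1\cup B_2)$, where $R=\{\,0<r\le1,\ 0<t<1\,\}$ is a contractible surface with boundary and $\pi_1(T^2)\cong\Z^2$ is generated by the meridians $\mu_1,\mu_2$ of $B_1,B_2$. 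Under this identification the double point torus $\DT=\{|z|=|w|=1/2\}$ becomes the product torus $\{(1/\sqrt2,\,1/2)\}\times T^2$, lying over an interior point $q$ of $R$.

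Granting this, choose the tubular neighborhood of $\DT$ to be $\DT\times D^2=D\times T^2$ for a small disk $D\subset R$ about $q$, so that
\[
U:=B^4-(B_1\cup B_2)-\DT\times D^2=(R-\mathring D)\times T^2 .
\]
The inclusion $\partial D\hookrightarrow R-\mathring D$ induces an isomorphism on $\pi_1$ (both groups are infinite cyclic, generated by the loop $\partial D$), hence the inclusion of the torus $T:=\partial(\DT\times D^2)=\partial D\times T^2$ into $U$ induces an isomorphism $\pi_1(T)\xrightarrow{\ \cong\ }\pi_1(U)\cong\Z^3$, the latter generated by $\mu_1$, $\mu_2$, and a meridian of $\DT$.

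Now apply van Kampen to the decomposition $B^4-(B_1\cup B_2)_K=U\cup_T\bigl(E(K)\times S^1\bigr)$, after enlarging $T$ to an open collar $T\times I$ (with fundamental group $\Z^3$) so the hypotheses are met. The result is the pushout $\pi_1(U)\ast_{\pi_1(T)}\pi_1\bigl(E(K)\times S^1\bigr)$; since $\pi_1(T)\to\pi_1(U)$ is an isomorphism, the other leg $\pi_1\bigl(E(K)\times S^1\bigr)\to\pi_1\bigl(B^4-(B_1\cup B_2)_K\bigr)$ is an isomorphism, which is the claim. Concretely, the amalgamation relations identify $\mu_1,\mu_2$ and the meridian of $\DT$ with their images under $\phi$, so those three generators can be deleted, and the only relations they leave behind say that three elements which already commute in $\pi_1(E(K)\times S^1)$ commute. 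Note that the gluing map $\phi$ enters only through being a diffeomorphism of $T^3$, which is exactly why the lemma holds for every $\phi$. The one place that repays care is the first step---fixing the product structure on $B^4-(B_1\cup B_2)$ and checking that $\DT$ is a product torus over an interior point of $R$; after that, excising $\DT\times D^2$ and running van Kampen are routine.
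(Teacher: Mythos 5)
Your proof is correct, and its geometric core is the same observation the paper uses: that $B^4$ minus the disk pair is nothing but a neighborhood of the double point torus. Your explicit coordinates $(r,t,\alpha,\beta)$ exhibit $B^4-(B_1\cup B_2)$ as $R\times T^2$ with $\DT$ a product torus over an interior point of $R$, which is precisely a proof of the paper's one-line assertion that $B^4-\nu(B_1\cup B_2)$ is diffeomorphic to $\DT\times D^2$. The difference is in how the conclusion is drawn: the paper stops there, noting that once the complement of the disks is itself the tubular neighborhood $\DT\times D^2$ being excised, the surgered complement is \emph{diffeomorphic} to $E(K)\times S^1$ (removing $\DT\times D^2$ from $\DT\times D^2$ leaves only a collar $T^3\times I$, which gluing to $E(K)\times S^1$ does not change), with no need for van Kampen and no dependence on $\phi$. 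You instead pass to $\pi_1$ and run a van Kampen pushout, using that $\pi_1(\partial D\times T^2)\to\pi_1((R-\mathring D)\times T^2)$ is an isomorphism; this is valid (a pushout along an isomorphism leaves the other leg an isomorphism), but it delivers only the $\pi_1$ statement, whereas your own product decomposition already gives the stronger diffeomorphism statement for free. So the van Kampen machinery is harmless but dispensable; the one genuinely load-bearing step in both arguments is the identification of the disk-pair complement with a product neighborhood of $\DT$, which you verified carefully.
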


\begin{proof}
Since $B^4 - \nu(B_1 \cup B_2)$ is diffeomorphic to $\DT \times D^2$, it follows that $B^4-(B_1 \cup B_2)_{K}$ is actually diffeomorphic to $E(K) \times S^1$.
\end{proof}
Note that the gluing map $\phi$ is encoded via the composition
\[
\pi_1(S^3 - \partial(B_1 \cup B_2)) \to \pi_1(B^4-(B_1 \cup B_2)_{K}) \overset{\cong}{\longrightarrow} \pi_1(E(K)\times S^1).
\]

We shall study conditions under which the group of a configuration $\Sigma$ in $X$ is preserved by a double point surgery.   This is true, for instance, when the complement of $\Sigma$ is simply--connected.  More generally, we show that for particular gluing maps, some abelian fundamental groups are preserved.

The basic tool for this is the following diagram to be used in applying the Van Kampen theorem to the decomposition  of $X-\Sigma_K$ in~\eqref{knotsurgerycompl}:
\begin{equation}\label{d-diagram}
\xymatrix@C=0pc{
&\pi_1(B^4-(B_1 \cup B_2)_{K})Ê\ar[dr]^{j_1}&\\
{\pi_1 (T\times I)} Ê\ar[ur]^{i_1} \ar[dr]^{i_2}
&&\pi_1(X - \Sigma_{K})\\
&\pi_1(X'-\Sigma')Ê\ar[ur]^{j_2}&
}
\end{equation}
To make the diagram and subsequent calculations easier to read, we use the same notation for a map and for the homomorphism that induces on the fundamental group.

\begin{proposition}\label{P:fundamentalgp} Suppose that $X$ is simply-connected and that $\pi_1(X - \Sigma)$ is abelian. Then $i_2$ in diagram~\eqref{d-diagram} is surjective.  Moreover, the group is preserved by $k$-twisted double point surgery in the following cases:
\begin{enumerate}
\item\label{F1}
Suppose that $\pi_1(X-\Sigma)$ is an infinite cyclic group
generated by the meridian $\mu_2$ of $\Sigma_2$. If $k=0$ then $\pi_1(X -
\Sigma_{K}) \cong \Z$.
\item \label{F2}
Suppose that  $\pi_1(X-\Sigma)$  is a finite cyclic group $\Z_q$
generated by the meridian $\mu_1$ of $\Sigma_1$ with a relation
$\mu_1^p\mu_2=1$ for some $p$. If $(p+k,q)=1$, then $\pi_1(X -
\Sigma_{K}) \cong \Z_q$.
\item\label{F3}
Suppose that $\pi_1(X-\Sigma)$ is $\zm \oplus \zn$ so that the
meridian $\mu_1$ of $\Sigma_1$ is a generator of $\zm$ and the other
meridian $\mu_2$ of $\Sigma_2$ is a generator of  $\zn$. If $(m,
kn)=1$, then $\pi_1(X - \Sigma_{K})$ is $\zm \oplus \zn$.
\end{enumerate}
\end{proposition}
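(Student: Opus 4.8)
\section*{Proof proposal for Proposition~\ref{P:fundamentalgp}}

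The plan is to apply van Kampen's theorem to the decomposition \eqref{knotsurgerycompl} and the diagram \eqref{d-diagram}, after identifying the three groups explicitly. Write $\Gamma=\pi_1(X-\Sigma)$ and $\pi=\pi_1(E(K))$, let $\mu_K\in\pi$ be the meridian --- which normally generates $\pi$ --- and let $t=[S^1]$, which is central in $\pi\times\la t\ra=\pi_1(E(K)\times S^1)$, this last being $\pi_1(B^4-(B_1\cup B_2)_K)$ by Lemma~\ref{L:fungp1}. Reading \eqref{twistbleptsurgery} in $\pi_1$, the map $i_1$ of \eqref{d-diagram} is given on $\pi_1(T\times I)=\la\mu_1\ra\oplus\la\mu_2\ra$ by $i_1(\mu_1)=\mu_K$ and $i_1(\mu_2)=\mu_K^{k}t$, while $i_2(\mu_j)=m_j$, the class in $\Gamma$ of a meridian of $\Sigma_j$. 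First I would prove that $i_2$ is onto: since $X$ is simply connected, every loop in $X-\Sigma$ bounds a disk in $X$, which may be put in general position with respect to $\Sigma$ (a $2$-disk generically misses the $0$-dimensional double-point set), exhibiting the loop as a product of conjugates of $m_1^{\pm1},m_2^{\pm1}$; as $\Gamma$ is abelian this gives $\Gamma=\la m_1,m_2\ra=\operatorname{im}(i_2)$. Consequently $j_2(\Gamma)=\la j_1(\mu_K),\,j_1(\mu_K^{k}t)\ra\subseteq\operatorname{im}(j_1)$, so by van Kampen $G:=\pi_1(X-\Sigma_K)=\la\operatorname{im}(j_1),\operatorname{im}(j_2)\ra=\operatorname{im}(j_1)$; and since $i_2$ is onto with kernel $K_0$, the pushout collapses to $G=(\pi\times\la t\ra)\big/\la\la i_1(K_0)\ra\ra$.

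Next comes the case analysis, where the point is to identify $K_0$ and then $i_1(K_0)$. In case \eqref{F3}, $K_0=\la m\mu_1,\,n\mu_2\ra$, so $i_1(K_0)$ is generated by $\mu_K^{m}$ and $(\mu_K^{k}t)^{n}=\mu_K^{kn}t^{n}$; in case \eqref{F2}, using $m_2=m_1^{-p}$, one finds $K_0=\la q\mu_1,\,p\mu_1+\mu_2\ra$ and $i_1(K_0)$ generated by $\mu_K^{q}$ and $\mu_K^{p+k}t$; in case \eqref{F1}, with $k=0$ and $m_2$ generating $\Gamma\cong\Z$, one gets a single relation of the shape $\mu_K=t^{a}$. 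The crucial step is to show in each case that the image $\bar\mu_K$ of $\mu_K$ in $G$ is a power of the central element $\bar t$, hence is itself central in $G$. In \eqref{F1} this is immediate. In \eqref{F2} we have $\bar\mu_K^{\,q}=1$ and $\bar\mu_K^{\,p+k}=\bar t^{-1}$ in $G$; since $(p+k,q)=1$, writing $\alpha(p+k)+\beta q=1$ gives $\bar\mu_K=(\bar\mu_K^{\,p+k})^{\alpha}(\bar\mu_K^{\,q})^{\beta}=\bar t^{-\alpha}$. Case \eqref{F3} is the same argument with $q$ replaced by $m$ and $p+k$ by $kn$, using $(m,kn)=1$ --- which in particular forces $(m,n)=1$.

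Once $\bar\mu_K$ is known to be central: since $\mu_K$ normally generates $\pi$, the subgroup $j_1(\pi)$ equals $\la\bar\mu_K\ra$ and is abelian, so $G=\operatorname{im}(j_1)$, being generated by $j_1(\pi)$ and the central $\bar t$, is abelian. Hence $G=H_1(X-\Sigma_K)$, which from the presentations above is $\bigl(\Z\la\mu_K\ra\oplus\Z\la t\ra\bigr)$ modulo the images of the relations; a Smith-normal-form computation returns $\Z$, $\Z_q$, and $\zm\oplus\zn$ in the three cases. To see directly that this isomorphism is the expected one, note that $j_2\colon\Gamma\to G$ is onto --- because, with $j_1(\pi)$ abelian, $G=\operatorname{im}(j_1)=\la\bar\mu_K,\bar t\ra=j_1\big(i_1(\pi_1(T\times I))\big)=j_2(\Gamma)$ --- and is split by the retraction $G\to\Gamma$ induced through the pushout from the homomorphism $\pi\times\la t\ra\to(\pi\times\la t\ra)^{\mathrm{ab}}\cong\Z^{2}\to\Gamma$ sending $\mu_K\mapsto m_1$ and $t\mapsto m_1^{-k}m_2$ (well defined since $\Z^{2}$ is free abelian and $\Gamma$ is abelian); therefore $j_2$ is an isomorphism, and it carries the meridians $m_1,m_2$ of the components to the generators as asserted.

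The step I expect to be the main obstacle is exactly showing that $\bar\mu_K$ is central: it is the only place where the numerical hypotheses on $k$ enter, and they are genuinely needed, since the retraction above shows $j_2$ is injective for every $k$, so all of the content of the proposition lies in the surjectivity of $j_2$ --- equivalently, in forcing the meridian $\mu_K$, and hence all of the knot group $\pi$, to collapse into the central line $\la t\ra$.
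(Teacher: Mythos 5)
Your proposal is correct and follows essentially the same route as the paper: van Kampen applied to the decomposition~\eqref{knotsurgerycompl}, surjectivity of $i_2$ from normal generation by meridians plus abelianness, identification of $i_1(\ker i_2)$ in each case, and use of the gcd hypotheses to show that the image of $\mu_K$ is a power of the central class $[S^1]$, forcing $\pi_1(X-\Sigma_K)$ to be abelian. Your closing Smith-normal-form computation and the retraction showing $j_2$ is an isomorphism are a slightly more explicit finish than the paper's terse conclusion, but they are elaborations of the same argument rather than a different method.
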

\begin{proof}
In the diagram~\eqref{d-diagram} given by the Van Kampen Theorem,
$\pi_1 (T\times I)$ is $\Z\oplus\Z$ generated by $\mu_1$, $\mu_2$
and $\pi_1(X'-\Sigma')\cong \pi_1(X-\Sigma)$, which is normally generated by $\mu_1$, $\mu_2$. Hence if
$ \pi_1(X-\Sigma)$ is abelian, $i_2$ is onto.
This implies that in all three cases of the proposition, $\pi_1(X - \Sigma_{K})$ is isomorphic to
$\pi_1(B^4-(B_1 \cup B_2)_{K})Ê/ \la i_1(\ker i_2)\ra$.

Case~\ref{F1}.  Since $\pi_1(X-\Sigma) \cong \Z$, we must have a relation
$\mu_1\mu_2^d =1$ for some $d\in\Z$.  The element $\mu_1\mu_2^d$ generates $\ker i_2$ and
its image under $i_1$ is
$\mu_K(\mu_K^k[S^1])^d$.  Using Lemma~\ref{L:fungp1}, the group
$\pi_1(B^4-(B_1 \cup B_2)_{K})Ê/ \la i_1(\ker i_2)\ra$ has a
presentation $\la \pi_1(E(K)\times S^1) \mid
\mu_K(\mu_K^k[S^1])^d=1\ra$ which is the same as 
\[
\la \pi_1(E(K)),[S^1]\mid \mu_K^{1+kd}[S^1]^{d}=1, [\beta,
[S^1]]=1,\forall\beta\in\pi_1(E(K)) \ra.
\]
So, if $k=0$, then
$[S^1]^{d}=\mu_K^{-1}$. Since $[S^1]$ commutes with every element in
$\pi_1(E(K))$, so does $\mu_K$. This means that the group $\pi_1(X -
\Sigma_{K})$ is abelian, and hence isomorphic to $\Z$.

Case~\ref{F2}.
In this situation, $\ker i_2$ is generated by
$\mu_1^q$ and $\mu_1^p\mu_2$, and so the group $\pi_1(X - \Sigma_{K})$
has a presentation 
\[ \la \pi_1(E(K)), [S^1] \mid \mu_K^q=1, \ \mu_K^p(\mu_K^k[S^1])=1,  [\beta,
[S^1]]=1,\forall\beta\in\pi_1(E(K)) \ra.
\]
If $(p+k,q)=1$ then
$(p+k)s+qt=1$ for some $s$, $t$. If we raise the 
relation $\mu_K^p(\mu_K^k[S^1])=1$ to the $s^{th}$ power, we get $(\mu_K^p(\mu_K^k[S^1]))^s=\mu_K^{(p+k)s}[S^1]^s=1$ (note that
$\mu_K$ and $[S^1]$ commute), and moreover
$\mu_K^{(p+k)s}[S^1]^s=\mu_K^{1-qt}[S^1]^s=\mu_K[S^1]^s=1$. This
relation makes $\mu_K$ commute with every element in $\pi_1(E(K))$
and so $\pi_1(X - \Sigma_{K})$ is $\Z_q$.

Case~\ref{F3}.
As above, $\ker i_2$ is generated by $\mu_1^m$,
$\mu_2^n$ and so the presentation of $\pi_1(X - \Sigma_{K})$ is 
\[
\la \pi_1(E(K)), [S^1] \mid \mu_K^m=1, (\mu_K^{k}[S^1])^{n}=1,
[\beta, [S^1]]=1,\forall\beta\in\pi_1(E(K)) \ra.
\]
If $(m,kn)=1$
then $ms+knt=1$ for some $s$, $t$. We note the relation
$1=((\mu_K^{k}[S^1])^{n})^t=\mu_K^{knt}[S^1]^{nt}=\mu_K^{1-ms}[S^1]^{nt}=\mu_K[S^1]^{nt}$,
which makes $\mu_K$ commute with every element in $\pi_1(E(K))$.
Thus, the result follows.
\end{proof}

In the next section, we will use configurations corresponding to the first two cases to construct examples that will prove Theorem~\ref{t:new-config}.  The third case will be used in proving Theorem~\ref{t:exotic-action}.

\section{Examples}\label{S:examples}
In the previous section we determined the effect of a double point surgery on the group of a configuration.  In this section we give some examples of configurations such that the group is abelian.  Combined with the calculations in the previous sections, we find double point surgeries with appropriately chosen gluing $\phi$, for which the group is unchanged.

A first observation is the following lemma, which computes the homology of the complement of a configuration.
\begin{lemma}\label{L:H1}
Suppose $X$ is a $4$--manifold with $H_1(X) = 0$, and suppose that $\Sigma$ is a configuration of surfaces $\Sigma_1,\ldots,\Sigma_k$.   Choose a basis $A_1,\ldots,A_n$ for $H_2(X;\Z)$.  Then $H_1(X-\Sigma)$ has a presentation with generators the meridians $\mu_i$ of the components of $\Sigma$, and one relation
\begin{equation}\label{E:relation}
\Sigma_{j=1}^k a_{ij} \mu_j = 0
\end{equation}
for each generator $A_i$, where $a_{ij}= A_i \cdot \Sigma_j$.
\end{lemma}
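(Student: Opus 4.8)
The plan is to compute $H_1(X - \Sigma)$ directly from a handle decomposition of the configuration complement, using the fact that $H_1$ is abelian so we only need chain-level information and can ignore all subtleties of the fundamental group. First I would observe that a regular neighborhood $\nu(\Sigma)$ of the configuration deformation retracts onto $\Sigma$, and that $X = (X - \mathrm{int}\,\nu(\Sigma)) \cup_{\p\nu(\Sigma)} \nu(\Sigma)$. Since $H_1(X) = 0$ and (because $X$ is connected with $H_1 = 0$) $H_3(X) \cong H^1(X) = 0$ as well when $X$ is closed — or more simply, since we only need a presentation rather than an exact computation, I would run the Mayer--Vietoris sequence for this decomposition in low degrees, or equivalently use the long exact sequence of the pair $(X, X - \Sigma)$ together with excision and the Thom isomorphism for the normal disk bundles.

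The key steps, in order: (1) Identify $H_*(\nu(\Sigma))$ and $H_*(\p\nu(\Sigma))$; since the components $\Sigma_j$ are embedded, a meridian $\mu_j$ generates the $S^1$-factor of each boundary bundle, and the class of $\mu_j$ in $H_1(X - \Sigma)$ is well-defined. (2) By excision and the Thom isomorphism, $H_*(X, X-\Sigma) \cong \bigoplus_j H_{*-2}(\Sigma_j)$, so in particular $H_2(X, X - \Sigma) \cong \Z^k$ generated by the disk fibers (whose boundaries are the meridians $\mu_i$), and $H_4(X, X-\Sigma) \cong \bigoplus_j H_2(\Sigma_j) \cong \Z^k$. (3) Write the relevant portion of the long exact sequence of the pair: $H_2(X) \xrightarrow{\p} H_2(X, X-\Sigma) \to H_1(X - \Sigma) \to H_1(X) = 0$. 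This shows $H_1(X-\Sigma)$ is the cokernel of $\p\co H_2(X) \to H_2(X, X-\Sigma) \cong \Z\langle \mu_1,\ldots,\mu_k\rangle$. (4) Compute $\p$ on the basis $A_i$ of $H_2(X)$: under the identification of $H_2(X, X - \Sigma)$ with the disk fibers, $\p A_i$ records how $A_i$ meets each $\Sigma_j$, which is exactly $\sum_j (A_i \cdot \Sigma_j)\,\mu_j = \sum_j a_{ij}\mu_j$. This follows from representing $A_i$ by a surface meeting $\Sigma$ transversally and noting that each intersection point contributes a disk fiber (with sign) to the relative class. That yields the claimed presentation.

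I expect the main obstacle to be step (4): pinning down, with correct signs and without hand-waving, that the connecting homomorphism $\p\co H_2(X) \to H_2(X, X-\Sigma)$ sends $A_i$ to $\sum_j a_{ij}\mu_j$. The clean way is geometric — pick a transverse representative $F$ of $A_i$, isotope so $F \cap \nu(\Sigma)$ is a union of fibers $D_{p}$ over the intersection points $p \in F \cap \Sigma_j$, and observe $[F] \in H_2(X, X-\Sigma)$ equals $\sum_p \epsilon_p [D_{\pi(p)}]$ where $\epsilon_p = \pm 1$ is the local intersection sign; summing over $p$ lying on $\Sigma_j$ gives the coefficient $A_i \cdot \Sigma_j$. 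One should also remark that a basis of $H_2(X)$ suffices to generate all relations because the long exact sequence only requires the image of $\p$, which is determined on generators; and that since $H_1(X) = 0$ there are no further relations beyond these. The rest — that the meridians generate $H_1(X-\Sigma)$, which is immediate from surjectivity of $H_2(X,X-\Sigma) \to H_1(X-\Sigma)$ and the fiber-disk description — is routine.
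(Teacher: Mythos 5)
Your argument is correct, but it is packaged differently from the paper's. The paper's proof is a short geometric one: the meridians generate ``by standard arguments''; each class $A\in H_2(X)$ gives a relation by intersecting a representing surface with $\Sigma$; and, conversely, any relation $\sum_j a_j\mu_j=0$ is realized by capping off a null-homology in $X-\Sigma$ with meridian disks to produce a class in $H_2(X)$, so by additivity a basis of $H_2(X)$ yields all relations. You instead run the long exact sequence of the pair $(X,X-\Sigma)$, identify $H_2(X,X-\Sigma)\cong\Z^k$ on the meridian-disk fibers, and present $H_1(X-\Sigma)$ as the cokernel of the inclusion-induced map $H_2(X)\to H_2(X,X-\Sigma)$, computed on a transverse representative. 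The two routes are dual: your exactness argument is precisely what makes the ``no further relations'' step automatic, whereas the paper proves that step by the explicit capping construction; conversely, the paper never needs to compute $H_2(X,X-\Sigma)$, while you do, and there your phrase ``Thom isomorphism for the normal disk bundles'' is slightly too quick, since the components of $\Sigma$ intersect and $\nu(\Sigma)$ is a plumbing rather than a disjoint union of disk bundles. The degree-two statement you need (that $H_2(X,X-\Sigma)$ is free on the $k$ fiber classes, and that fibers over a fixed component are all homologous) is still true and easily justified -- e.g.\ by excising to $\nu(\Sigma)$ and a Mayer--Vietoris comparison with $\sqcup_j\nu(\Sigma_j)$, or by Lefschetz duality $H_2(X,X-\Sigma)\cong H^2(\Sigma)$ -- but it deserves a sentence. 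Also note that the map $H_2(X)\to H_2(X,X-\Sigma)$ you compute in step (4) is induced by inclusion, not the connecting homomorphism (which goes $H_2(X,X-\Sigma)\to H_1(X-\Sigma)$); this is only a naming slip, as the computation itself is the right one.
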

\begin{proof}
By standard arguments, the meridians $\mu_j$ generate $H_1(X - \Sigma)$.  Let $A\in H_2(X)$; then if we intersect an embedded surface representing $A$ with $\Sigma$ and remove a regular neighborhood of $\Sigma$ we see a relation of the form~\eqref{E:relation}
where $a_j = A \cdot \Sigma_j$. Conversely, given such a relation, we build a surface $A$ as the union of meridian discs to $\Sigma$, together with a surface representing a null-homology for $\Sigma_{i=1}^k a_i \mu_i$.   Note that if $C = A + B$, then the relation for $C$ is the sum of the relations for $A$ and $B$.  Thus a basis for $H_2(X)$ gives rise to the desired presentation.
\end{proof}

Configurations with trivial group are easy to come by.  For any configuration $\CC = (C_1,\ldots,C_k)$ in a simply--connected manifold $X$,  the fundamental group
$\pi_1(X - \CC)$ is generated by the meridians of the $C_i$.  Note that if we blow up $X$ along some $C_i$, then the meridian of the new surface $C_i$ becomes trivial, because $C_i$ intersects the exceptional curve in one point.  Hence, given any configuration $\CC$, we can create a new configuration $\CC' \subset X' = X \sharp_k \CPbar^2$ with $\pi_1(X' - \CC') $ trivial by blowing up $k$ times, once on each $C_i$.

More interesting examples of abelian fundamental groups come from the Zariski conjecture, proved by Deligne~\cite{deligne:zariski} and Fulton~\cite{fulton:zariski}, which states that the complement of a nodal curve in $\CP^2$ has abelian fundamental group.   This implies the following.
\begin{example}\label{E:nodal}
Let $\Sigma_1,\Sigma_2$ be smooth complex curves in $\CP^2$ in general position, having degrees $d_1, d_2$ respectively.  Then $\pi_1(\CP^2 - (\Sigma_1\cup \Sigma_2)) \cong \Z \oplus \Z_d$ where $d= \gcd(d_1, d_2)$.  According to case~\ref{F1} of Proposition~\ref{P:fundamentalgp}, if $d_1=1$, the operation of $0$-twisted double point surgery gives a new configuration whose group is $\Z$.
\end{example}

A second example is provided by a generalization of the Zariski conjecture to other algebraic surfaces, due to Nori~\cite{nori:zariski}.
\begin{example}\label{E:rational}
Let $\Sigma_1,\Sigma_2$ be smooth complex curves in general position in $X ={\bf P}^1 \times {\bf P}^1$ carrying the homology classes  $(p,q)$ and $(1,0)$ respectively.  Assume that $p$ and $q$ are both positive.  Note that $X - \nu(\Sigma_2) $ is simply connected, being diffeomorphic to $S^2 \times D^2$, and that $\Sigma_1\cdot \Sigma_1 > 0$.  Hence theorem II of~\cite{nori:zariski} applies to show that the fundamental group of $X - (\Sigma_1\cup \Sigma_2)$ is abelian.  Lemma~\ref{L:H1} yields the relations (written additively)
\begin{align*}
q\, \mu_1 & = 0 \\
p\, \mu_1 + \mu_2 & = 0.
\end{align*}
So the group $\pi_1(X - (\Sigma_1\cup \Sigma_2)) = H_1(X - (\Sigma_1\cup \Sigma_2)) \cong \Z_q$, generated by $\mu_1$. Written multiplicatively, the second relation  between the meridians reads $\mu_1^p\mu_2=1$. Thus we may do a $k$-twisted double point surgery on the configuration $\Sigma_1 \cup \Sigma_2$, using any knot $K$, and $k$ satisfying $(p+k,q)=1$.  By case~\ref{F2} of Proposition~\ref{P:fundamentalgp} this gives a new configuration  with group  $ \Z_q$.
\end{example}

\begin{example}\label{E:spheres}  We will construct  connected surfaces  $\Sigma_m$ and $\Sigma_n$ in $X = S^2 \times S^2$ carrying the classes $(m,0)$ and $(0,n)$ respectively, with $\pi_1(X - \Sigma)\cong \zm \oplus \zn$.  Start with the configuration $S_m \cup S_n$, where $S_m$ consists of $m$ parallel copies of $S^2 \times \{q\}$ and $S_n$ consists of $n$ parallel copies of $ \{p\} \times S^2$.  The complement is a product $\left( S^2 - \{p_1,\ldots,p_n\} \right) \times \left( S^2 - \{q_1,\ldots,q_m\} \right)$ and so has fundamental group
\begin{equation}\label{e:smn}
\langle \mu_1,\ldots \mu_m, \nu_1,\ldots \nu_n\;|\; [\mu_i,\nu_j] =1;\ \prod \mu_i=1=\prod\nu_j \rangle
\end{equation}

The configuration $(S^2 \times S^2,S_m \cup S_n)$ may be visualized as follows (compare~\cite{akbulut-kirby:branch}). Take the standard handle decomposition of $S^2 \times S^2$, with two $0$-framed $2$-handles added to $B^4$ along a Hopf link.  Then $S_m$ is $m$ copies of the core of one of the $2$-handles, together with $m$ disjoint disks in the $4$-ball; $S_n$ has a similar description.  To make the surface $\Sigma_m$, connect up the  boundaries of the $m$ copies of the $2$-handle core with $m-1$ oriented $1$-handles.  The result is an unknotted circle, which may be filled in by a disk in the $4$-ball.  Do a similar construction to make $\Sigma_n$. This operation is depicted in Figure~\ref{F:smn}, for $m=3$ and $n=2$.
\begin{figure}[!ht]\label{F:smn}
\centering
\psfrag{m1}{$\mu_1$}
\psfrag{mm}{$\mu_m$}
\psfrag{n1}{$\nu_1$}
\psfrag{nn}{$\nu_n$}
\psfrag{ra}{$\Longrightarrow$}
\includegraphics[scale=1]{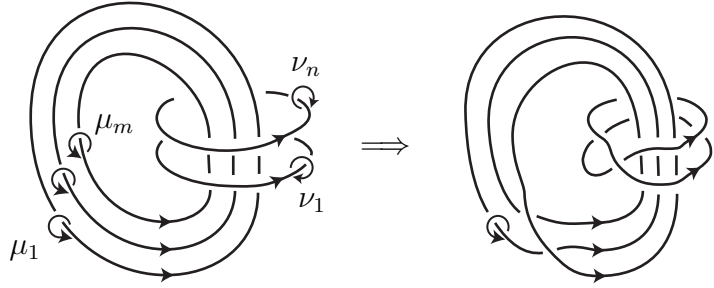}
\caption{Spheres in $S^2 \times S^2$}
\label{F:spheres}
\end{figure}
The $1$-handle addition that connects up the $j^{th}$ and $k^{th}$ disks in $S_m$ adds the relation $\mu_j = \mu_k$ to the presentation~\eqref{e:smn}, and similarly all of the meridians of the components of $S_n$ become equal when we pass to $\Sigma_n$.  It follows that  $\pi_1(X - \Sigma)\cong \zm \oplus \zn$, with the meridians of $\Sigma_m$ and $\Sigma_n$ generating the two summands.  If $(m,kn) =1$, then applying case~\ref{F3} of Proposition~\ref{P:fundamentalgp} gives a new configuration whose group is $ \zm \oplus \zn$.
\end{example}

\subsection{A symplectic configuration}\label{ss:symp}
In the next section, we will show that the configurations constructed in examples~\ref{E:nodal} and~\ref{E:rational}
can be smoothly knotted by double point surgery, for appropriate choices of the knot $K$.  This will be done by
showing that the smooth surfaces gotten by smoothing all of the double points are knotted.  This strategy will succeed,
for instance, if the configuration is symplectically embedded with all intersections positively oriented and the Alexander
polynomial of $K$ is non-trivial.  Unfortunately, this technique will not work to show that example~\ref{E:spheres}
yields smoothly knotted configurations, because of the observation of
Tian-Jun Li~\cite[Example 3.3]{li:symplectic-surfaces} that the adjunction formula prevents the class $(m,0)$ in $H_2(S^2 \times S^2)$ from being represented by
a connected symplectic surface.

On the other hand, smoothly knotted configurations whose complements have $\pi_1 = \zm \oplus \zn$ are a key
ingredient in the proof of Theorem~\ref{T:exotic-action}.   The next example gives a symplectic configuration with this
group, using a more complicated construction than that in example~\ref{E:spheres}.

\begin{proposition}\label{P:tori}
For any positive $m$ and $n$, there is a simply--connected symplectic $4$--manifold $X$, containing a symplectic configuration $\T_{mn} =  T_m
\cup T_n$ with each $T_i$ a torus, and
\[\pi_1(X - \T) \cong \zm \oplus \zn\]
generated by the meridians of $T_m$ and $T_n$.
\end{proposition}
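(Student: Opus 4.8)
The plan is to realize, by a symplectic surgery construction, the homological pattern that Lemma~\ref{L:H1} forces on any such configuration, and then to promote the resulting computation of $H_1$ to one of $\pi_1$. First I would record that pattern. If $\pi_1(X-\T)\cong\zm\oplus\zn$ with the two meridians as generators, then a fortiori $H_1(X-\T)\cong\zm\oplus\zn$, so by Lemma~\ref{L:H1} the sublattice of $\Z\mu_1\oplus\Z\mu_2$ generated by the vectors $(A_i\cdot T_m,\,A_i\cdot T_n)$, as $A_i$ runs over a basis of $H_2(X)$, must equal $m\Z\oplus n\Z$. Because the intersection form of a closed simply connected $4$--manifold is unimodular, this is equivalent to: $[T_m]=m\tau$ and $[T_n]=n\rho$ for some $\tau,\rho\in H_2(X)$ admitting dual classes $A,B$ with $A\cdot\tau=1$, $A\cdot\rho=0$, $B\cdot\rho=1$, $B\cdot\tau=0$. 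In particular $[T_m]\cdot[T_n]=mn(\tau\cdot\rho)$, so to get a configuration with a double point available for the later surgery one should arrange $\tau\cdot\rho=1$, forcing $T_m$ and $T_n$ to meet in exactly $mn$ points; and the adjunction formula for the symplectic tori $T_m,T_n$ imposes $m\tau^2=-K_X\cdot\tau$ and $n\rho^2=-K_X\cdot\rho$. This is precisely the symplectic-torus analogue of Example~\ref{E:spheres} (there $X=S^2\times S^2$, $\tau=(1,0)$, $\rho=(0,1)$); the difference is that one must now work in an ambient $X$ in which the divisible classes $m\tau$, $n\rho$ actually carry connected symplectic tori, which $S^2\times S^2$ does not.

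Next I would construct $X$. The device that produces a connected symplectic torus in a class divisible by $m$ is the logarithmic transform: on a simply connected symplectic elliptic surface, a multiplicity-$m$ log transform on a fiber is a symplectic operation yielding a multiple fiber $F_m$ with $m[F_m]=[F]$, so that an ordinary fiber $F$ is a symplectic torus whose class is divisible by $m$. I would build $X$ by Gompf symplectic fiber sums of elliptic building blocks, with a multiplicity-$m$ and a multiplicity-$n$ log transform performed so that the two surgered fiber classes meet transversally — equivalently, so that $X$ carries two transverse torus fibrations with multiple fibers of multiplicities $m$ and $n$ — take $T_m,T_n$ to be the corresponding ordinary fibers, and read off the dual classes $A,B$ from sections of the two fibrations. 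To guarantee simple connectivity I would, if necessary, fiber-sum with extra copies of the rational elliptic surface $E(1)$ away from the configuration, using that the complement of a fiber in $E(1)$ is simply connected so that such sums kill $\pi_1(X)$ without changing $\T$. Making the two divisible-class tori cross while keeping $X$ simply connected and symplectic is the delicate point of the construction, and is the ``more complicated construction'' referred to above.

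The final, and main, step is to identify $\pi_1(X-\T)$. The meridians $\mu_1,\mu_2$ commute because at each of the $mn$ double points of $\T$ the local model is the Hopf link in $S^3$, whose complement has $\pi_1\cong\Z^2$ on $\mu_1,\mu_2$; the relations $\mu_1^m=1$ and $\mu_2^n=1$ come from the two multiple fibers, exactly as the multiple fibers of a log-transformed elliptic surface impose torsion on the meridian (and as the $1$--handles collapsed \eqref{e:smn} to $\zm\oplus\zn$ in Example~\ref{E:spheres}). The content is to show, via van Kampen applied to an explicit handle decomposition of $X-\T$ that incorporates the vanishing cycles of the elliptic/Lefschetz structure, that after all cancellations no generators beyond $\mu_1,\mu_2$ survive — this uses the simple connectivity of the fiber-complements of the building blocks — and that no relation strictly finer than $\mu_1^m=\mu_2^n=1$ is introduced, which is ensured by the homological bookkeeping of the first paragraph. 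I expect this to be the principal obstacle: keeping the group from being too large (nonabelian, or with extra free generators) and from being too small. Once $\pi_1(X-\T)$ is known to be abelian it agrees with $H_1(X-\T)$, which the first paragraph identifies with $\zm\oplus\zn$ on the meridians, giving the proposition.
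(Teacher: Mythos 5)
Your first paragraph (the homological constraints) and your final reduction (abelian $+$ generated by meridians $\Rightarrow$ $\pi_1 = H_1 \cong \zm\oplus\zn$) are fine, but the two steps that constitute the actual content of the proposition are left as acknowledged gaps rather than proved. First, the construction: you propose two transverse torus fibrations with log transforms of multiplicities $m$ and $n$, but after surgering one fibration the other is no longer a global fibration, the symplectic status of the resulting configuration (both tori symplectic, all intersections positive) is asserted rather than arranged, and killing $\pi_1(X)$ ``by fiber sums away from the configuration'' is exactly where the difficulty sits --- you need symplectic or Lagrangian tori disjoint from $\T$ to sum along, and the obvious candidates meet either $\T$ or the tori already used. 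You yourself flag this as ``the delicate point,'' but it is not resolved. Second, the fundamental group: commuting local meridians at a double point and the homological bookkeeping do not control whether extra generators or non-abelian relations survive in $\pi_1(X-\T)$; you defer this (``I expect this to be the principal obstacle'') to an unspecified van Kampen/handle computation. Since the proposition is precisely the claim that such an $X$ and $\T$ exist with this $\pi_1$, a proposal that postpones both the existence argument and the group computation has not proved it.

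For comparison, the paper avoids log transforms entirely: it starts with $T_{12}\cup T_{34}\subset T^4$ and replaces each torus by a \emph{braided} symplectic torus (Fintushel--Stern), namely $S_1\times k_m$ and $k_n\times S_4$ with $k_m$ the $(m,1)$-torus knot in $S^1\times D^2$, so the divisible classes $m[T_{12}]$, $n[T_{34}]$ are carried by connected symplectic tori meeting transversally and positively inside a fixed neighborhood $N$ of $T_{12}\cup T_{34}$. Simple connectivity of $X-\nu(\T)$ is then forced by explicit fiber sums: with $E(1)$ along $T_{13}$ and $T_{14}$, and --- since $T_{23},T_{24}$ are no longer available --- with $\CP^2\#_8\CPbar^2$ along a square-one torus obtained by summing $T_{24}$ with a section of $E(1)$. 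Finally $\pi_1(X-\Tmn)$ is computed by an explicit van Kampen argument over the pieces $W=D_{12}^*\times D_{34}^*$ and two flat punctured-disk bundles $A$, $B$ over $T_{12}^*$, $T_{34}^*$ with cyclic-permutation monodromy; the fiber sums kill the $\alpha_i$ and the meridians $\gamma,\eta$ of the original tori, and the monodromy relations identify all the $\mu_j$ and all the $\nu_i$, yielding $\zm\oplus\zn$ generated by the meridians. If you want to salvage your route, you would need to supply an equally explicit model in which both the intersection pattern and the complement's group can actually be computed.
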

\begin{proof}
Consider the $4$-torus $T^4 = S_1 \times S_2 \times S_3 \times S_4$, which contains the six standard 2-dimensional
tori $T_{ij}= S_i \times S_j$, each of which is either symplectic or Lagrangian.  We start with the symplectic
configuration $\T =T_{12} \cup T_{34}$, and want to do fiber
sums~\cite{gompf:symplectic,mccarthy-wolfson:symplectic-sum} with an elliptic surface $E(1)$ along
symplectic or Lagrangian tori missing $\T$ to make its complement simply
connected.   (Sums along Lagrangian tori require a preliminary deformation of the symplectic structure~\cite{gompf:symplectic}.)  If we set $M = \left(T^4 \#_{T_{13} = F} E(1)\right) \#_{T_{14} = F} E(1)$, then $\pi_1(M) \cong \Z$ generated by the circle $S_2$.   The other tori ($T_{24}$ and $T_{23}$) that miss $\T$ intersect either $T_{13}$ or $T_{14}$ and so cannot be used to perform a final fiber sum to kill this generator.

However, recall that $E(1)$ has a 2-sphere section, of square $-1$.  When we do the first fiber sum (along $T_{13}$) then we remove a disk from this section, and also from the torus $T_{24}$.   Thus, $M$ contains a symplectically embedded torus $T'$ of square $-1$, made from the connected sum of $T_{24}$ with the section in $E(1)$.   In $\CP^2$, there is a symplectic torus of square $9$, representing 3 times the generator, and hence $\CP^2$ blown up $8$ times contains a torus $T''$ of square $1$.  So we can make one more fiber sum, to get the manifold
\[
X = M \#_{T' = T''} \left(\CP^2\#_8 \CPbar^2\right).
\]
By construction, $X$ is symplectic, and contains the symplectic configuration $\T$.  The complement of $\T$ is simply--connected, because $F \subset E(1)$ and $T'' \subset \CP^2\#_8 \CPbar^2$ both have simply--connected complements.

\begin{figure}[!ht]
\centering
\includegraphics[scale=.7]{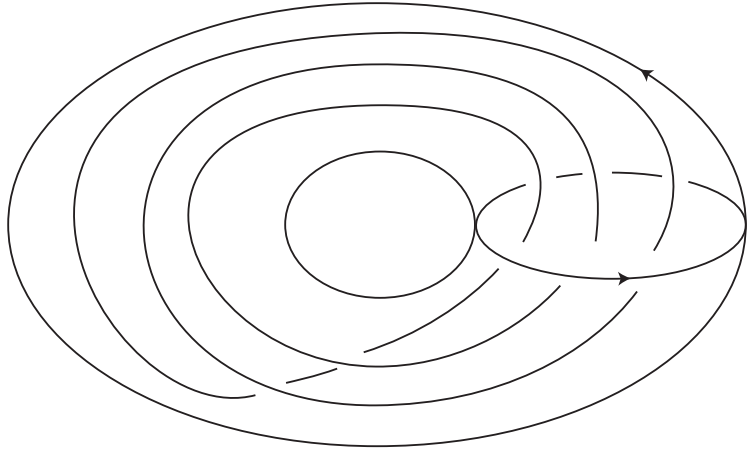}
\caption{Braid $k_m$ in $S^1 \times D^2$}
\label{F:braid}
\end{figure}
To build the configuration $\Tmn$, we need an observation of Fintushel-Stern~\cite{fs:symplectic-homology} (compare~\cite{moishezon:braids-chisini}) that in a neighborhood of a symplectic torus of square $0$, there is a `braided' symplectic torus representing $m$ times the generator of $H_2$, for any $m > 0$.  In fact, if the neighborhood is written as $S^1 \times (S^1 \times D^2)$, the braided torus can be taken to be $S^1$ cross the $(m,1)$-torus knot in $S^1 \times D^2$, representing $m$ times the generator of $H_1(S^1 \times D^2)$, and denoted $k_m$ in what follows.

Choose a disc $D_{12}$ lying in $T_{12}$, and a set $P$ of $n$ points in its interior, arranged in cyclic order as in Figure~\ref{F:points}; similarly choose $m$ points $Q$ in the interior of a disc $D_{34} \subset T_{34}$.  We will use a superscript ${}^*$ to indicate punctured objects, so that for instance $D_{12}^* = D_{12} - P$ and $T_{12}^* = T_{12} - \mathrm{int}(D_{12})$.
\begin{figure}[!ht]
\centering
\psfrag{c1}{$\mu_1$}
\psfrag{c2}{$\mu_2$}
\psfrag{c3}{$\mu_3$}
\psfrag{c}{$b$}
\psfrag{rc1}{$\rho(\mu_1)$}
\psfrag{rc2}{$\rho(\mu_2)$}
\psfrag{rc3}{$\rho(\mu_3)$}
\psfrag{r}{$\rho$}
\includegraphics[scale=.9]{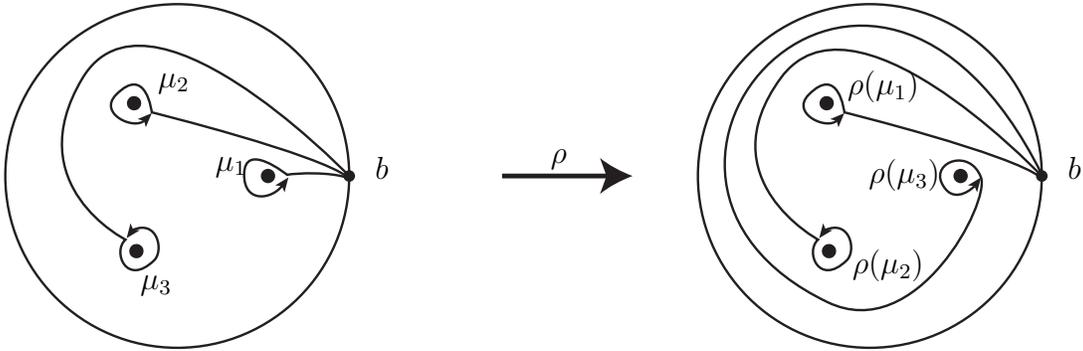}
\caption{$D_{12}^*$ and rotation $\rho_3$}
\label{F:points}
\end{figure}
Let $b$ be a point on $\partial D_{12}$ that will be used as a base point for both $D_{12}^*$ and $T_{12}^*$ and let
$c$ be a point on $\partial D_{34}$ that will be used as a base point for both $D_{34}^*$ and $T_{34}^*$.   The meridians of the points $P$, with base paths as indicated in Figure~\ref{F:points} will be named $\mu_i$, and the meridians of $Q$ will be called $\nu_j$.   The base point for all of the calculations of the $4$-dimensional  complements will be $b \times c$.

\begin{definition}\label{D:tmn}
\begin{enumerate}
\item The configuration $\Tmn$ is $T_m \cup T_n$ where the first torus $T_m$ is $S_1 \times k_m \subset S_1 \times (S_2 \times D_{34})$ and $T_n = k_n \times S_4 \subset (D_{12}\times S_3) \times S_4$.
\item
Let $\rho_m$ be a diffeomorphism of $D_{34}$ that permutes the
punctures cyclically, and is the identity on the boundary, and let
$\rho_n$ be the corresponding diffeomorphism of $D_{12}$.
\end{enumerate}
\end{definition}
\noindent
Note that  $T_m$ is $S_1$ times the mapping torus $S_2 \times_{\rho_m} Q \subset S_2 \times_{\rho_m} D_{34} = S_2 \times D_{34}$ with a similar description for $T_n$.

By construction, and Lemma~\ref{L:H1}, the homology group of the complement is $\zm \oplus \zn$; we now calculate its fundamental group.  The strategy is to divide the complement of $\Tmn$ into two pieces: the complement in $X$ of a regular neighborhood $N$ of $\T$, and $N - \Tmn$.  By construction, $X - N$ is simply--connected, so the main work goes into computing $\pi_1(N- \Tmn)$.

To describe $N$, recall that $T^4$ has a handle decomposition with a $0$-handle $H$, four $1$--handles $H_i$ and six $2$--handles $H_{ij}$, plus handles of index $3$ and $4$.  (Our convention is that the number of subscripts is the index of the handle.)    Then $N$ may be chosen to be the union of the $0$ and $1$--handles, plus the $2$--handles $H_{12}$ and $H_{34}$ with co-cores $D_{34}$ and $ D_{12}$.   We may assume that $\Tmn$ is embedded in $N$ so that $H$ is exactly $ D_{12} \times D_{34}$, and $\Tmn \cap H = D_{12} \times Q \cup P \times  D_{34}$. The complement of $\Tmn$ in $T^4$ is obtained by adding the rest of the handles to $N - \Tmn$, and then doing the fiber sums described above.

For the fundamental group calculation, choose curves $\alpha_1$  and $\alpha_2$ in $T_{12}^*$ that are homotopic (in $T^4$) to $S_1$ and $S_2$, according to the following recipe; the symbols refer to Figure~\ref{F:puncture}.  The loops $\alpha_1$ are given by the products of arcs
\[
\alpha_1 = a_1 \ast \gamma_3 \ast \gamma_4\quad\textrm{and}\quad \alpha_2 = \gamma_1 \ast a_2 \ast \gamma_4
\]
respectively.  The boundary of $D_{12}$, oriented as in the picture, is then given by $\gamma = \gamma_1 \ast \gamma_2 \ast\gamma_3\ast \gamma_4$.
\begin{figure}[!h]
\centering
\psfrag{a1}{$a_1$}
\psfrag{a2}{$a_2$}
\psfrag{c1}{$\gamma_1$}
\psfrag{c2}{$\gamma_2$}
\psfrag{c3}{$\gamma_3$}
\psfrag{c4}{$\gamma_4$}
\psfrag{b}{$b$}
\psfrag{h12}{$h_{12}$}
\includegraphics[scale=.9]{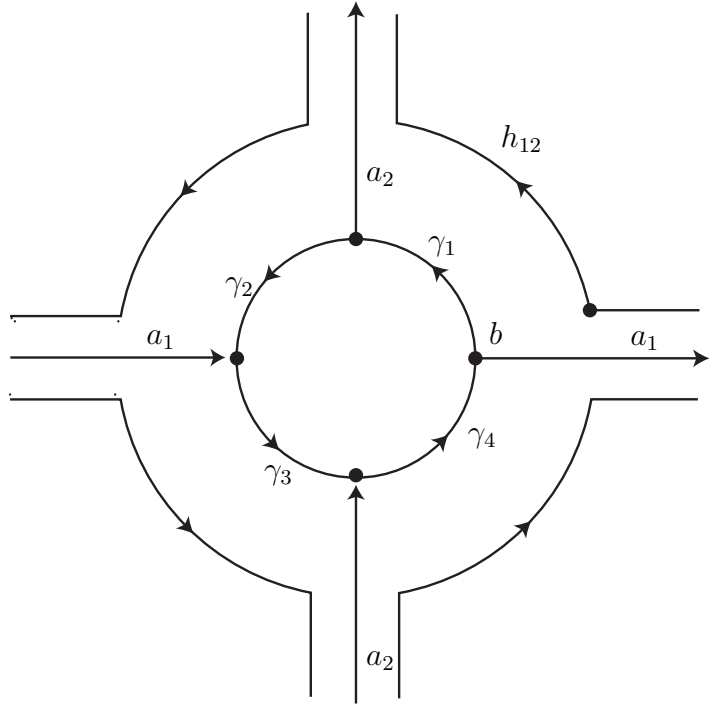}
\caption{Curves on $T_{12}^*$}
\label{F:puncture}
\end{figure}
The boundary of $D_{34}$ will be denoted $\eta$.

We collect some basic observations in a lemma.
\begin{lemma}  \label{L:pieces} $N- \Tmn = W \cup A \cup B$, where the pieces are described as follows.
\begin{enumerate}
\item\label{h0} $W = H - H \cap \Tmn = H - P \times D_{34} - D_{12} \times Q$ is the product $D_{12}^* \times D_{34}^*$.
\item The boundary of the configuration $P \times D_{34} \cup D_{12} \times Q $ in the $0$--handle $H$ is the $(n,m)$ cable of the Hopf link drawn in Figure~\ref{F:hopf} for $m=3,n=2$.
\begin{figure}[!ht]
\centering
\includegraphics[scale=.5]{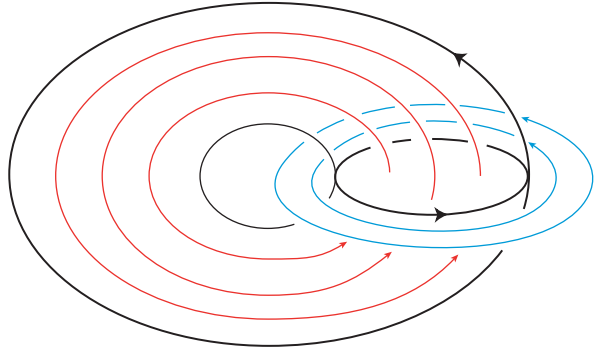}
\caption{$(m,n)$ cable of Hopf link}
\label{F:hopf}
\end{figure}
\item  $(N - \Tmn) - (H - H \cap \Tmn) $ is the union of two pieces $A$ and $B$, where $A$ is the complement of the part of $\Tmn$ lying in the $1$--handles $H_1,\; H_2$ and the $2$--handle $H_{12}$, and similarly for $B$.
\item\label{bundle12} $A$ is a flat $D_{34}^*$--bundle over $T^*_{12}$, with trivial monodromy around the circle $\alpha_1$ and monodromy around the circle $S_2$ given by $\rho_m$.
\item $B$ has a similar description as a flat $D_{12}^*$--bundle over $T^*_{34}$
\end{enumerate}
\end{lemma}

The description in Lemma~\ref{L:pieces}~\ref{bundle12} gives a presentation
\begin{multline*}
\pi_1(A,b\times c) = \langle \alpha_1,\alpha_2,\nu_1,\ldots,\nu_m, \gamma\; |\; \gamma = [\alpha_1,\alpha_2],\  [\gamma,\nu_i],\ [\alpha_1,\nu_i]\ (i= 1,\ldots, m),\\ \alpha_2 \nu_i \alpha_2^{-1} = \nu_{i+1}\ (i= 1,\ldots, m-1),\ \alpha_2 \nu_{m} \alpha_2^{-1} = (\nu_m\cdots \nu_{1})\cdot \nu_{1} \cdot (\nu_m\cdots \nu_{1})^{-1}
\rangle
\end{multline*}
The first relation (which makes the generator $\gamma$ redundant) comes from the addition of the $2$--handle $h_{12}$.

There is a similar presentation for $\pi_1(B,*)$, with generators $\alpha_3, \alpha_4, \mu_1,\ldots \mu_n$.   Finally, Lemma~\ref{L:pieces}~\ref{h0} implies that $\pi_1(W) \cong \pi_1(D_{12}^* \times D_{34}^*) = F_n \times F_m$, where $F_m,\ F_n$ are free groups generated by the $\mu_i$ and $\nu_j$ respectively.  Note that these generators all live in $\partial H \cap \Tmn$.  Two successive applications of van Kampen's theorem (adding $A$ and then $B$ to $W$) yield the following presentation of $\pi_1(N - \Tmn)$.   Unless specified to the contrary,  $i$ runs from $1$ to $m$  in any appearance of $\nu_i$ and $j$ runs from $1$ to $n$ in any appearance of $\mu_j$.
\begin{multline*}
\pi_1(N - \Tmn,b\times c) = \langle \alpha_1,\alpha_2,\alpha_3,\alpha_4, \nu_1,\ldots,\nu_m, \mu_1,\ldots,\mu_n\gamma, \eta\; |\; [\mu_j,\nu_i], \\
\gamma = [\alpha_1,\alpha_2],\ \gamma = \mu_n\cdots \mu_1,\ [\gamma,\nu_i],\\
 \eta = [\alpha_3,\alpha_4],\ \eta = \nu_m\cdots \nu_1,\  [\eta,\mu_j],\\
[\alpha_1,\nu_i]\ , \alpha_2 \nu_i \alpha_2^{-1} = \nu_{i+1}\ (i= 1,\ldots, m-1),\ \alpha_2 \nu_{m} \alpha_2^{-1} = (\nu_m\cdots \nu_{1})\cdot \nu_{1} \cdot (\nu_m\cdots \nu_{1})^{-1},\\
[\alpha_4,\mu_j]\ , \alpha_3 \mu_j\alpha_3^{-1} = \mu_{j+1}\ (j= 1,\ldots, n-1),\ \alpha_3 \mu_{n} \alpha_3^{-1} = (\mu_n\cdots \mu_{1})\cdot \mu_{1} \cdot (\mu_n\cdots \mu_{1})^{-1}
\rangle
\end{multline*}

When we add on $X- N$, all of the generators $\alpha_1,\ldots,\alpha_4$ are killed by the fiber sums. Moreover, since $X -N$ is simply connected, the curves $\gamma$ and $\eta$, which are the meridians of the original configuration $\T$, become trivial.  The last two rows of relations imply that $\nu_1= \cdots = \nu_m$ and $ \mu_1 = \cdots =\mu_n$, and the triviality of $\gamma$ and $\eta$ imply that $\mu_1^n =1 = \nu_1^m$. Since $\mu_1$ and $\nu_1$ commute, we finally get $\pi_1(X - \Tmn) = \zm \oplus \zn$, generated by the meridians of the components.
\end{proof}

\section{Topological classification}\label{S:top}
As described in Proposition~\ref{P:fundamentalgp}, in certain circumstances, $k$-twisted double point surgeries on configurations with group $\Z$ or $\Z_m$ preserve the group of the configuration. The main result of this section is that in fact the topological type of the configuration is also unchanged by the surgery.  As in Proposition~\ref{P:fundamentalgp}, we consider a $2$-component configuration $\Sigma_1\cup \Sigma_2$, with meridians $\mu_1, \mu_2$.
\begin{theorem}\label{T:topclass} In the three cases discussed in Proposition~\ref{P:fundamentalgp}, the configuration $\Sigma_{K, A(k)}$  resulting from $k$-twisted double point surgery is topologically isotopic to $\Sigma$.
\end{theorem}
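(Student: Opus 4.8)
The plan is to show that $k$-twisted double point surgery does not change the homeomorphism type of the complement of the configuration, then to promote this to a homeomorphism of pairs, and finally to a topological ambient isotopy via the results of Perron and Quinn cited in the Remark after Theorem~\ref{t:new-config}.

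\emph{Step 1: a degree one map of complements.} Using the decomposition~\eqref{knotsurgerycompl} together with Lemma~\ref{L:fungp1} and its proof (which identifies $B^4-\nu(B_1\cup B_2)$ with $\DT\times D^2$), write
\[
X-\Sigma_K=(X'-\Sigma')\cup_{T\times I}\bigl(E(K)\times S^1\bigr),\qquad X-\Sigma=(X'-\Sigma')\cup_{T\times I}\bigl(\DT\times D^2\bigr).
\]
There is a standard degree one normal map $E(K)\to S^1\times D^2$ restricting to the identity on the boundary $T^2$ and inducing the abelianization $\pi_1(E(K))\to\Z$ on fundamental groups; crossing it with $S^1$ and gluing to the identity on $X'-\Sigma'$ produces a degree one map $h\co X-\Sigma_K\to X-\Sigma$ that is a homeomorphism on the boundary of the regular neighborhood of the configuration. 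By Corollary~\ref{embedding} corresponding components of the two configurations have the same genus and normal Euler number, so these boundaries are canonically identified and $h$ is compatible with the normal bundle and Hopf-link data.

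\emph{Step 2: $h$ is a simple homotopy equivalence.} By Proposition~\ref{P:fundamentalgp}, in each of the three cases $\pi_1(X-\Sigma_K)\cong\pi_1(X-\Sigma)\cong G$ with $G$ one of $\Z$, $\Z_q$, $\zm\oplus\zn$, and since $h$ is degree one it induces this isomorphism. A Mayer--Vietoris comparison for the two decompositions of Step~1 reduces the assertion that $h$ is a $\Z[G]$-homology equivalence to the statement that the restriction $E(K)\times S^1\to\DT\times D^2$ is an isomorphism on homology with the local system pulled back from the $G$-cover of $X-\Sigma$. Because $G$ is abelian this system is determined by the images of $\mu_K$ and $[S^1]$, which are prescribed by the matrix $A(k)$ of~\eqref{E:matrix2}; the resulting twisted homology of $E(K)$ is governed by the Alexander module of $K$, and the arithmetic hypotheses appearing in Proposition~\ref{P:fundamentalgp} (namely $k=0$, $(p+k,q)=1$, or $(m,kn)=1$) are precisely what force $\mu_K$ into a cyclic subgroup on which $\Delta_K$ takes a unit value, using $\Delta_K(1)=\pm1$. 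Hence the twisted homology of $E(K)$ matches that of $S^1\times D^2$, $h$ is a $\Z[G]$-homology equivalence, and being a $\pi_1$-isomorphism it is a homotopy equivalence; the Whitehead torsion, supported in the collapse region, is checked to vanish, so $h$ is simple.

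\emph{Step 3: surgery, pairs, and isotopy, and the main obstacle.} The groups $\Z$, $\Z_q$, $\zm\oplus\zn$ are good in the sense of Freedman--Quinn, so topological surgery rel boundary applies to the compact complements. Since $h$ is the identity outside a compact neighborhood of $\DT$, its normal invariant is supported there and the relevant surgery obstruction is seen to vanish; thus $h$ is homotopic rel boundary to a homeomorphism $X-\Sigma_K\to X-\Sigma$ respecting $\partial\nu(\Sigma)$ and the normal bundle data. Extending over the tubular neighborhoods of the components (legitimate by the genus and Euler number matching of Corollary~\ref{embedding}) yields a homeomorphism of pairs $(X,\Sigma_K)\to(X,\Sigma)$; its underlying self-homeomorphism of the simply-connected manifold $X$ is, up to isotopy, supported in a ball and so acts trivially on $H_2(X)$, hence by Perron~\cite{perron:isotopy2} and Quinn~\cite{quinn:isotopy} it is topologically isotopic to the identity, and transporting $\Sigma_K$ along that isotopy gives the desired ambient isotopy. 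I expect the crux to be Step~2: verifying that the collapse $E(K)\times S^1\to\DT\times D^2$ becomes a $\Z[G]$-homology equivalence exactly under the number-theoretic conditions of Proposition~\ref{P:fundamentalgp}. Everything else is either formal van~Kampen and Mayer--Vietoris bookkeeping or an appeal to established machinery (Freedman--Quinn surgery for good groups, Perron--Quinn for isotopy), but the homology-with-twisted-coefficients computation is where the hypotheses are genuinely used and must be carried out carefully, keeping track of base points and of the images of the meridians under the gluing $A(k)$.
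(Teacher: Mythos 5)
Your outline has the same architecture as the paper's argument (a homotopy equivalence of complements built by replacing $E(K)\times S^1$ over the double point torus, surgery on a normal cobordism to get an s-cobordism, extension over tubular neighborhoods, and promotion to ambient isotopy in the spirit of Perron and Quinn), and for the finite cyclic cases (2) and (3) the paper indeed simply quotes~\cite{kim-ruberman:surfaces}, using the vanishing of $L^{h}_5$ and $L^{s}_5$ for finite cyclic groups. The genuine gap is in your Step 3 for case (1), where $\pi_1 \cong \Z$: you assert that ``the relevant surgery obstruction is seen to vanish'' because the map is supported near $\DT$. This is exactly the point where the infinite cyclic case is harder: $L^{s}_5(\Z)\cong \Z$, and after Shaneson splitting~\cite{shaneson:product} and $\mathrm{Wh}(\Z)=0$, the obstruction for the normal map $F\co W^5\to (X-\nu(\Sigma))\times I$ is $\tfrac18\mathrm{sign}(V)$, where $V=F^{-1}(M\times I)$ for a $3$--manifold $M$ carrying the generator of $H_3(X-\nu(\Sigma),\partial)$. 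Locality of the construction gives no control over this signature, and the paper explicitly allows it to be nonzero; it is killed by modifying $F$ over an embedded $S^1\times B^4$ (carrying the $1$--dimensional homology, meeting $V$ in $S^1\times 0$) by the normal map $S^1\times(\#_n E_8)_0\to S^1\times B^4$ built from the topological $E_8$--manifold, and only after this correction does surgery yield an s-cobordism, which is a product by Freedman's theorem~\cite{freedman-quinn}. This obstruction-killing step is the main new content beyond~\cite{kim-ruberman:surfaces}, and without it your argument does not go through in case (1).

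A secondary point concerns your Step 2. The arithmetic hypotheses of Proposition~\ref{P:fundamentalgp} are used only to guarantee that the double point surgery preserves $\pi_1$; they are not conditions forcing $\Delta_K$ to take unit values, and your proposed mechanism (Alexander module of $K$ evaluated where $\Delta_K$ is a unit, via $\Delta_K(1)=\pm1$) is misdirected. In case (1) the paper verifies the homotopy equivalence by passing to the universal (infinite cyclic) cover: the relation $\mu_1\mu_2^d=1$ becomes $\mu_K[S^1]^d=1$, so the induced cover of $E(K)\times S^1$ is the quotient of $\widetilde{E(K)}\times\R$ by $g_1g_2^d$, a mapping torus diffeomorphic to $E(K)\times\R$, hence homologically a circle for \emph{every} knot $K$; a Mayer--Vietoris comparison then gives the homotopy equivalence. (Note $\mu_K$ maps to $-d$ in $\Z$, so no evaluation of $\Delta_K$ at $1$ enters.) So the step you identify as the crux is in fact routine once $\pi_1$ is controlled; the real crux is the nontrivial $L_5(\Z)$ obstruction described above.
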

\begin{proof}
In cases~\ref{F2} and~\ref{F3}, the group is finite cyclic, and the result follows directly from the main
results of our earlier paper~\cite[Theorems 1.2 and 1.3]{kim-ruberman:surfaces}.  These results use
the fact~\cite[section 11]{hambleton-taylor:guide}  that the Wall L-groups $L^{h}_5$ and $L^{s}_5$
vanish for a finite cyclic fundamental group.  In case~\ref{F1}, the L-groups $L^{h}_5(\Z)$ and
$L^{s}_5(\Z)$ are isomorphic to $\Z$, so a little additional work is required.
The outline of the proof is the same as in~\cite{kim-ruberman:surfaces}.

The first step is to construct a homotopy equivalence (rel boundary)
from $X - \nu(\Sigma_K) \to X -\nu(\Sigma)$.  This is done by
choosing a degree-1 homology equivalence between $E(K) \times S^1$
and $\nu(\DT)$, where $\DT$ is the double point torus that is the
identity on the boundary, and gluing it to the identity on $X -
\nu(\Sigma) - \nu(\DT)$  to get a homology equivalence that is an
isomorphism on $\pi_1$.  To see that this gives the desired homotopy
equivalence $X - \nu(\Sigma_K) \to X -\nu(\Sigma)$, pass to the
universal cover of both manifolds.  By construction, there is a
relation $\mu_1\mu_2^d= 1$ between the meridians of the
configuration $\Sigma$.  From the proof of case~\ref{F1} of
Proposition~\ref{P:fundamentalgp}, this yields a relation $\mu_K
[S^1]^d = 1$, where $E(K) \times S^1$ is considered as a subset of $X
- \nu(\Sigma_K)$.

Let $g_1:\widetilde{E(K)} \to \widetilde{E(K)}$ generate the
covering transformations of the infinite cyclic cover
$\widetilde{E(K)} \to E(K)$, and let $g_2: \R \to \R$ be given by $t
\to t+1$.  Then the induced infinite-cyclic covering
$\widetilde{E(K) \times S^1} \to E(K) \times S^1$ is the quotient of
the $\Z\oplus \Z$ cover $\widetilde{E(K)} \times \R$ by the subgroup
of the covering group generated by $g_1g_2^d$. This is the same as
the mapping torus of $g_1^{-1}$, and hence (compare~\cite[Lemma
4.6]{kim-ruberman:complement}) is diffeomorphic to $E(K) \times \R$
which is homologically just a circle.  By a Mayer-Vietoris argument,
the map on universal covers is a homology equivalence, and hence the
original map is a homotopy equivalence.

As in~\cite[\S 2]{kim-ruberman:surfaces}, this map can be modified so that it is normally cobordant to the identity map, yielding a normal map $F: W^5 \to (X -\nu(\Sigma)) \times I$.  From~\cite{shaneson:product} and the vanishing of $\textrm{Wh}(\Z)$, the obstruction in $L^s_5(\Z)$ to surgering $W$ to obtain an s-cobordism may be computed as follows.  Choose a submanifold
$M^3 \subset X -\nu(\Sigma)$ carrying a generator of $H_3(X -\nu(\Sigma),\partial(X -\nu(\Sigma)))$, and assume that $F$ is transverse to $M \times I$.  Then $V = F^{-1}(M \times I)$ is an oriented $4$--manifold, and the surgery obstruction is given by $\frac18\textrm{sign}(V)$.

This obstruction may well be non-zero, but it can be killed by modifying the normal map $F$.  Choose an embedded $S^1 \times B^4$ in the interior of $W$, with $S^1 \times 0 \ti V$, carrying the $1$-dimensional homology, with the additional property that $F$ is a diffeomorphism of $S^1 \times B^4$ onto its image. There is a normal map
\[
S^1 \times \left(\#_n E_8\right)_0 \longrightarrow S^1 \times B^4
\]
where $n= \frac18\textrm{sign}(V)$, and  $E_8$ is a topological spin
manifold with signature $8$. The subscript $0$ denotes the punctured
manifold, and if $n<0$ the connected sum should be interpreted as a
sum of $E_8$ manifolds with the opposite orientation.   Replace $F$
on $S^1 \times B^4$ with this normal map, giving a normal map with
trivial surgery obstruction.  Surgery on this normal map produces
the desired $s$-cobordism, which is a product by Freedman's
theorem~\cite{freedman-quinn}.    As
in~\cite{kim-ruberman:surfaces}, the resulting homeomorphism between
the complements of $\Sigma$ and $\Sigma_K$ gives rise to a
topological ambient isotopy between $\Sigma$ and $\Sigma_K$.
\end{proof}

\section{Smoothly knotted configurations}\label{S:smooth}
In this section we describe a technique to show that modification of
a configuration by double point surgery can change its smooth
embedding type.  Let us assume that the
self-intersection $\Sigma \cdot \Sigma = n$ is greater than or equal
to $0$.  (The work of Mark~\cite{mark:hf-surfaces} would allow us to
consider configurations with negative self-intersection, but we do
not need this extension for the purposes of the current paper.)

Our idea is to replace a configuration $\Sigma$ by a smoothly embedded surface obtained by smoothing 
$\Sigma$ at its double points.  There is a small technical point that we must address, which is that the smoothing is not quite canonical; it depends on the relative orientation of the components.  
\begin{definition}\label{d:orientation} An orientation of a configuration $\Sigma$ is an orientation of each of its components.  If $\Sigma$ is oriented, then $-\Sigma$ denotes the configuration in which the orientation of each component has been reversed.
\end{definition}
The link of a given double point of $\Sigma$ is a Hopf link with
linking number $\pm 1$ with the sign given by the local intersection number. A
\emph{smoothing} of that double point is the replacement of the pair
of transversally intersecting disks by an oriented annulus $A$ that
spans the Hopf link and gives the same boundary orientation as that pair of disks.
Associated to an oriented configuration $(X,\Sigma)$, there is a canonically
associated smooth surface, which is the embedded surface resulting
from smoothing each double point in an oriented fashion. Let
$(X',\tSigma)$ be the result of blowing up $X$ at $n$ points on the
resulting smooth surface; note that $\tSigma \cdot \tSigma =0$.
We would like to show that any smooth invariant of the pair $(X',\tSigma)$ is {\em a fortiori} an
invariant of $(X,\Sigma)$ considered as a configuration up to diffeomorphism.  This is straightforward (and is stated as case (i) of the next lemma) if we insist that such a diffeomorphism preserve the orientation of the configuration, but requires a minor hypothesis and a little argument in the more general setting of Theorem~\ref{t:new-config}, where we make no assumptions about the orientation of the configuration. For simplicity, we give the hypotheses in the case that the configuration $\Sigma$ has two components. 
\begin{lemma}\label{L:smoothing}
Let $(X,\Sigma)$ and $(X,\Sigma')$ be oriented configurations, and suppose that $f:X \to X$ is an orientation-preserving diffeomorphism taking $\Sigma$ to $\Sigma'$.
\begin{enumerate}
\item If $f$ preserves the orientation of the configurations, then it extends to a diffeomorphism $f': (X',\tSigma) \to (X',\tSigma')$.
\item Suppose that $\Sigma = \Sigma_1 \cup \Sigma_2$, and that $\Sigma_1 \cdot \Sigma_2 \neq 0$. Then there is a diffeomorphism $(X',\tSigma) \to (X',\tSigma')$.
\end{enumerate}
In particular, if the homology classes of $\Sigma_1$ and $\Sigma_2$ are nontrivial, then $\tSigma$ is canonically associated to the equivalence class of $(X,\Sigma)$ up to {\em isotopy}. 
\end{lemma}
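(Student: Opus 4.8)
The plan is to prove the two cases of Lemma~\ref{L:smoothing} separately, then deduce the final ``in particular'' statement.

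For case (i): if $f$ preserves the orientation of each component of the configuration, then near each double point $f$ carries the local model (two transverse oriented disks, with a prescribed local intersection sign) to the corresponding local model for $\Sigma'$. Since the oriented smoothing is determined by the oriented Hopf link together with the requirement that the annulus induce the matching boundary orientation, $f$ carries the chosen smoothing annulus at each double point to an admissible smoothing annulus for $\Sigma'$; after an ambient isotopy supported near the double points we may arrange that $f$ actually carries $\tSigma$-minus-the-smoothing-regions to $\tSigma'$ and the annuli to the annuli. This gives a diffeomorphism of the smoothed surfaces. Since the smoothing operation does not change the self-intersection parity in a way that matters here, $\tSigma\cdot\tSigma = \tSigma'\cdot\tSigma' = 0$, and the $n$ blow-up points can be chosen compatibly (via $f$) on the smoothed surface, so $f$ extends to $f':(X',\tSigma)\to(X',\tSigma')$. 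This step is essentially bookkeeping about the local models.

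For case (ii): the issue is that $f$ might reverse the orientation of one component, say send $\Sigma_1$ to $\Sigma_1'$ orientation-reversingly and $\Sigma_2$ to $\Sigma_2'$ orientation-preservingly (the case where both are reversed reduces to case (i) applied after globally reversing, since $f$ is orientation-preserving on $X$). Then the ``oriented smoothing'' $f$ induces at each double point is the \emph{other} smoothing of the Hopf link — the one compatible with the reversed local intersection sign. The key point to establish is that changing the smoothing at a double point, when $\Sigma_1\cdot\Sigma_2 \neq 0$, does not change the diffeomorphism type of $(X',\tSigma)$. I would argue this by a local ``handle-slide''/tube-trading move: smoothing the \emph{other} way at a single double point $p$ changes $\tSigma$ by surgering along an arc; because $\Sigma_1$ and $\Sigma_2$ algebraically intersect nontrivially, there is another double point $q$ (or one can be produced by a regular homotopy within the fixed embedding type — more carefully, because the components still intersect, one can connect-sum the errant smoothing-change with a canceling one) at which the complementary smoothing-change can be performed, so that the net effect on $\tSigma$ is trivial up to diffeomorphism of $(X',\tSigma)$. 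Concretely: the surface $\Sigma_1 \# (-\Sigma_1)$ near an intersection, after smoothing both ways at paired points, is ambiently isotopic back; so $\tSigma$ built with ``wrong'' smoothings everywhere is diffeomorphic to the standard $\tSigma'$. The hypothesis $\Sigma_1\cdot\Sigma_2 \neq 0$ is exactly what guarantees the paired intersection point is present.

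Finally, the ``in particular'' statement: if $[\Sigma_1],[\Sigma_2] \neq 0$ in $H_2(X)$, then any ambient isotopy (hence any diffeomorphism isotopic to the identity, which preserves homology classes pointwise) preserves the orientation of each component up to the global sign, and since isotopies are orientation-preserving on $X$, case (i) or (ii) applies and the construction of $\tSigma$ is natural; thus $\tSigma$, and in particular any diffeomorphism invariant of $(X',\tSigma)$, depends only on the isotopy class of $(X,\Sigma)$. I expect the main obstacle to be case (ii): making the ``changing one smoothing doesn't change $(X',\tSigma)$ up to diffeomorphism'' argument precise, i.e.\ identifying the correct local move and checking that the hypothesis $\Sigma_1\cdot\Sigma_2\neq 0$ really supplies the canceling data rather than just algebraic nontriviality. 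Everything else is local normal-form bookkeeping.
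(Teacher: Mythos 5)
Your case (i) and the closing ``in particular'' deduction are essentially the paper's argument. The divergence, and the gap, is in case (ii), where you have the role of the hypothesis $\Sigma_1\cdot\Sigma_2\neq 0$ backwards. In the paper it is used purely algebraically: writing $f(\Sigma_1)=\epsilon_1\Sigma_1'$ and $f(\Sigma_2)=\epsilon_2\Sigma_2'$ as oriented surfaces, invariance of intersection numbers under the orientation-preserving diffeomorphism $f$ forces $\epsilon_1=\epsilon_2$, so the \emph{mixed} case (one component orientation preserved, the other reversed) never occurs. The only case beyond (i) is then the one where both orientations are reversed; there every local intersection sign is unchanged, so the same oriented smoothing annuli serve for $\Sigma$ and $\Sigma'$, and the only remaining point is extending over the blow-ups, which the paper handles by gluing in complex conjugation (an orientation-preserving diffeomorphism of $\CPbar^2$ reversing the exceptional $\CP^1$) at each blow-up; your reduction of the both-reversed case to case (i) by reversing all configuration orientations accomplishes the same thing and is fine.

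What does not work is your plan to \emph{handle} the mixed case by a ``change the smoothing at one double point and cancel it at another'' move. That case must be excluded, not accommodated, and no such move can succeed: if $\epsilon_1\epsilon_2=-1$ then $[\Sigma_1']\cdot[\Sigma_2']=-\Sigma_1\cdot\Sigma_2$ while the self-intersections of the components are unchanged, so the smoothed surfaces satisfy $\tSigma'\cdot\tSigma'=\tSigma\cdot\tSigma-4\,\Sigma_1\cdot\Sigma_2\neq\tSigma\cdot\tSigma$. The number of blow-up points ($n=\Sigma\cdot\Sigma$) then differs for the two configurations, the two blown-up manifolds have different second Betti numbers, and there is no diffeomorphism of the stated pairs at all. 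Moreover, your assertion that a nonzero \emph{algebraic} intersection number ``supplies the canceling data'' is never substantiated: it provides no pairing of geometric double points at which complementary smoothing changes could be traded off, and the tube-trading/regular-homotopy sketch is not a proof. The missing idea is simply the sign argument above, after which nothing like tube-trading is needed.
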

\begin{proof}
In both cases, a diffeomorphism between $(X,\Sigma)$ and $(X,\Sigma')$ must take double points to double points.  If, as in case (i), the local orientation of each disk is preserved, then the map $\Sigma \to \Sigma'$ extends over the annulus $A$ attached in the smoothing process.  The assumption in case (i) means that $\Sigma' \cdot \Sigma' = \Sigma \cdot \Sigma$, and we can assume that this diffeomorphism takes the $n$ points on $\Sigma$ where the blowup is done to the corresponding points on $\Sigma'$.  Thus the diffeomorphism extends (by the identity away from a ball) over the $n$ copies of $\CPbar^2$ added in the blowup.  This yields the diffeomorphism $f'$.  In case (ii),  we have that, as oriented surfaces, $f(\Sigma_1) = \pm \Sigma_1'$ and $f(\Sigma_2) = \pm \Sigma_2'$.  By invariance of intersection number, the signs must be the same for both components; case (i) covers the situation where both are positive.

If both signs are negative, then again the map $\Sigma \to \Sigma'$ extends over each annulus, and   $\Sigma' \cdot \Sigma' = \Sigma \cdot \Sigma$.  Recall that there is an orientation-preserving diffeomorphism (complex conjugation) on $\CP^2$ that reverses the orientation of $\CP^1$.  Gluing such a diffeomorphism to $f$ at each point where we perform a blowup gives a diffeomorphism $X' \to X'$ taking $\tSigma$ to $\tSigma'$. 

For the last part of the lemma, note that a diffeomorphism that is isotopic to the identity induces the identity map on homology.  Therefore, if $\Sigma_1$ and $\Sigma_2$ represent nontrivial homology classes, their orientations must be preserved by an isotopy, and so case (i) of the lemma applies.
\end{proof}
We now consider how a double point surgery on a configuration $(X,\Sigma)$ affects the smoothed surface constructed above.  From this point on, we assume that $\Sigma$ has two components, with nonzero intersection number. Hence, by Lemma~\ref{L:smoothing}, the smoothing $\tSigma$ is canonically associated to $\Sigma$.  

\begin{lemma}\label{L:double=rim} Consider a double point of the configuration $(X,\Sigma)$.  There is a smoothing  $(X,\Sigma')$ at this point such that the double point torus $\DT$ is disjoint from the annulus $A$. If $\alpha \subset A$ is an essential curve, then $\DT$ is the rim torus associated to $\alpha \subset \Sigma'$.
\end{lemma}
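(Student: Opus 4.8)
The plan is to work in the standard local model $(B^4, B_1 \cup B_2) = \{|z|,|w| \le 1\} \subset \C^2$ with the coordinate axes, where $\DT = \{|z|=|w|=1/2\}$.  First I would exhibit the smoothing concretely: replace $B_1 \cup B_2$ inside the smaller ball $\{|z|^2 + |w|^2 < 1/8\}$ by the annulus $A = \{zw = \epsilon\} \cap \{|z|^2+|w|^2 \le 1/8\}$ for a small $\epsilon > 0$, oriented so that its boundary (a Hopf link on a small $3$--sphere) matches the boundary orientation of $B_1 \cup B_2$; outside that ball $\Sigma'$ agrees with $\Sigma$.  The point is that this annulus is pushed entirely into the region $\{|z|^2+|w|^2 < 1/8\}$, which is disjoint from $\DT = \{|z|=|w|=1/2\}$ since on $\DT$ one has $|z|^2+|w|^2 = 1/2 > 1/8$.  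This proves the first assertion.  It is worth remarking that the orientation bookkeeping here is exactly the one fixed in Definition~\ref{d:orientation} and Lemma~\ref{L:smoothing}, so the smoothing $\Sigma'$ so produced is (after the subsequent blow-ups) the canonically associated smooth surface.

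Next I would identify $\DT$ as a rim torus for a curve $\alpha \subset A$.  Recall that a rim torus (in the sense of~\cite{fs:surfaces}) associated to an embedded curve $\alpha$ in a surface $F \subset X$ is the boundary of a tubular neighborhood, inside $\nu(F)$, of $\alpha$; concretely, if $\nu(\alpha) \cong \alpha \times D^2$ inside $F$ and $\nu(F)|_{\nu(\alpha)} \cong \alpha \times D^2 \times D^2$, then the rim torus is $\alpha \times (\partial D^2) \times \{0\}$ — equivalently $\alpha \times \mu$, where $\mu$ is the meridian of $F$ in $X$.  So the task is to check that the core circle $\alpha$ of the annulus $A = \{zw = \epsilon\}$ has a tubular neighborhood in $A$ whose boundary-of-normal-neighborhood-in-$X$ torus is isotopic to $\{|z|=|w|=1/2\}$.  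Parametrize $A$ by $z = \sqrt{\epsilon}\, e^{i\theta} t$, $w = \sqrt{\epsilon}\, e^{-i\theta}/t$ for $t$ near $1$; the circle $\alpha = \{t=1\}$ is $\{|z| = |w| = \sqrt{\epsilon}\}$, the "waist" of the annulus, and as $\theta$ runs over $[0,2\pi]$ this is a torus of the form $\mu_1 \times \mu_2$ linking each axis once.  Isotoping $\sqrt{\epsilon}$ out to $1/2$ (which is possible inside $B^4$, keeping away from the axes since $\epsilon>0$) carries this torus to $\DT$.  Finally one checks that a normal push-off of $\alpha$ inside $A$, thickened in the normal direction to $\Sigma'$ in $X$, traces out exactly this torus, so $\DT$ is indeed the rim torus of $\alpha$.

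The main obstacle I anticipate is not any single hard computation but rather pinning down the two competing descriptions of $\DT$ and matching them carefully: on the one hand $\DT = \mu_1 \times \mu_2$ as "the double point torus near a double point of the configuration $\Sigma$," and on the other hand "$\alpha \times \mu$, the rim torus of a curve in the smoothed surface $\Sigma'$."  The content of the lemma is precisely that, under the smoothing, the product $\mu_1 \times \mu_2$ of the two meridians of the two branches is the same torus as "core of the annulus $\times$ meridian of the smoothing," and making this identification requires keeping track of which $S^1$ factor of $\DT$ becomes the $\alpha$--direction and which becomes the $\mu$--direction.  Once the explicit local parametrization of $A$ above is in hand, this is a matter of comparing the two framings, but it is the step where one must be most careful.
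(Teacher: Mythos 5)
Your overall route is the same as the paper's: work in the standard local model, exhibit the smoothing as an annulus sitting strictly deeper in the $4$--ball than $\DT$ (your $\{zw=\epsilon\}$ is just the holomorphic version of the paper's radially pushed-in Seifert annulus for the Hopf link), so the disjointness statement is immediate; then identify $\DT$ with the circle normal bundle of $\Sigma'$ along a curve parallel to the core. One small point first: as written your $\Sigma'$ is not a surface, because $\partial\bigl(\{zw=\epsilon\}\cap\{|z|^2+|w|^2\le 1/8\}\bigr)$ is a Hopf link disjoint from the coordinate axes and so does not match $B_1\cup B_2$ on the small sphere; you need the standard interpolation (e.g.\ $zw=\epsilon\,\beta(|z|^2+|w|^2)$ for a cutoff $\beta$), or to say explicitly that the smoothing is taken up to isotopy. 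That is cosmetic.

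The genuine gap is in the last step, which is the only real content of the second assertion. The torus $\{|z|=|w|=\sqrt\epsilon\}$ is not the rim torus of $\alpha$: it is not even disjoint from $A$ (it meets $A$ exactly in the waist circle, which is a proper subset of it, not equal to it as you write), and the normal circle bundle of $\Sigma'$ over a push-off of $\alpha$ is a small torus hugging $A$, so it does not ``trace out exactly this torus.'' Passing from that small rim torus to $\DT$ requires an actual isotopy in the complement of $\Sigma'$, and the naive radial expansion $z=\sqrt\epsilon\,e^{i\theta}(1+se^{i\chi})$, $w=\sqrt\epsilon\,e^{-i\theta}(1+se^{i\chi})$ collides with the axes at $s=1$; moreover, with your choices ($A$ confined to the $1/8$--ball, $\DT$ at $|z|=|w|=1/2$), the obvious disks bounded by the meridian circles of $\DT$ meet $\Sigma'$ on the flat axis pieces, not on $A$, so the identification is not sitting where your sketch places it. This is exactly the step the paper carries out: it arranges the annulus to reach the unit sphere with $\DT$ at an intermediate radius, and observes that each meridian circle $\mu_1\subset\DT$ bounds a disk meeting $A$ transversally in one point, the union of these points being a curve parallel to the core; shrinking those disks identifies $\DT$ with the boundary of the normal bundle of $A$ restricted to that curve, i.e.\ with the rim torus. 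Your conclusion is correct and your framework can be completed (either by rescaling so the annulus reaches out past $\DT$ and running this meridian-disk argument, or by an explicit isotopy avoiding the collision above), but the decisive identification is currently asserted rather than proved, and the specific claim invoked for it is false as stated.
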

\begin{proof}
Consider a $4$-ball neighborhood of the double point, and recall that the double point torus may be chosen to live in
the boundary $3$-sphere, where it appears as the peripheral torus of either component of the Hopf link.  Make an
initial choice of the annulus as the standard Seifert surface for the Hopf link in this $3$-sphere. These surfaces are
labeled $\DT^{0}$ and $A^{0}$ in Figure~\ref{F:double=rim} below.  Note that their intersection is the curve $\alpha^{0}$
that is the core of the annulus.  Furthermore, note that each meridian $\mu_1$ for the first component of the Hopf link
intersects $A^{0}$ in a point, and that the corresponding meridian disk hits $A^{0}$ in an arc (drawn in blue).
\begin{figure}[!ht]\label{F:double=rim}
\centering
\psfrag{A}{$A^{0}$}
\psfrag{DT}{$\DT^{0}$}
\psfrag{D1}{$D_1$}
\psfrag{mu1}{$\mu_1$}
\psfrag{alpha}{$\alpha^{0}$}
\includegraphics[scale=.7]{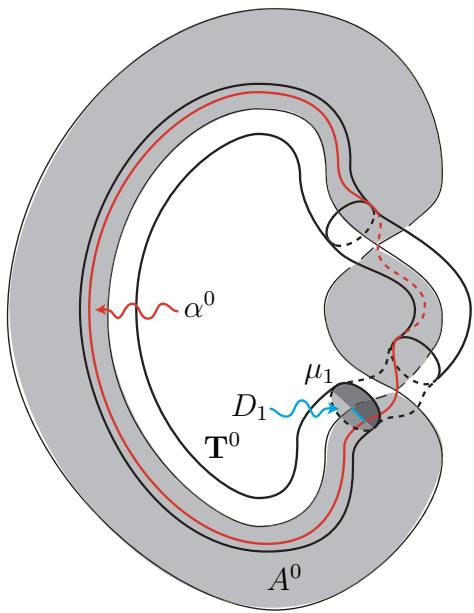}
\caption{Double point torus and annulus in $S^3$}
\end{figure}

Consider a function $f: A^{0}\to [1/2,1]$ that is $1$ on $\partial A^{0}$ and $1/2$ on $\alpha^{0}$, having no other critical
points.  Let $A$ be the result of pushing $A^{0}$ into the interior of the $4$-ball, leaving its boundary in the $3$-sphere,
so that a level set $f^{-1}(r)$ ends up at radius $r$ (in polar coordinates) in the $4$-ball.  Let $\DT$ be a copy of the
double point torus at radius $3/4$.    Each copy of the meridian $\mu_1$ on $\DT$ bounds a disk that intersects $A$
transversally in a single point; the union of those points forms a curve $\alpha$ (at radius $3/4$) parallel to
$\alpha^{0}$.  In this way, $\DT$ is identified with the boundary of normal bundle of $A$, restricted to $\alpha$.  By
definition, this is the rim torus.

The positions of $\DT$ and $A$ are illustrated schematically below; the figure on the left shows the positions in $S^3$, and the figure on the right shows the result of pushing $A^0$ and $\DT^0$ into the $4$-ball.
\begin{figure}[!ht]\label{F:double=rim-push}
\centering
\psfrag{A0}{$A^{0}$}
\psfrag{D1}{$D_1$}
\psfrag{mu0}{$\mu_1\subset \DT^{0}$}
\psfrag{mu}{$\mu_1\subset \DT$}
\psfrag{a0}{$\alpha^{0}$}
\psfrag{alpha}{$\alpha$}
\psfrag{to}{$\Longrightarrow$}
\psfrag{r1}{$1$}
\psfrag{s3}{$S^3$}
\psfrag{q}{$\frac34$}
\psfrag{h}{$r=\frac12$}

\includegraphics[scale=.9]{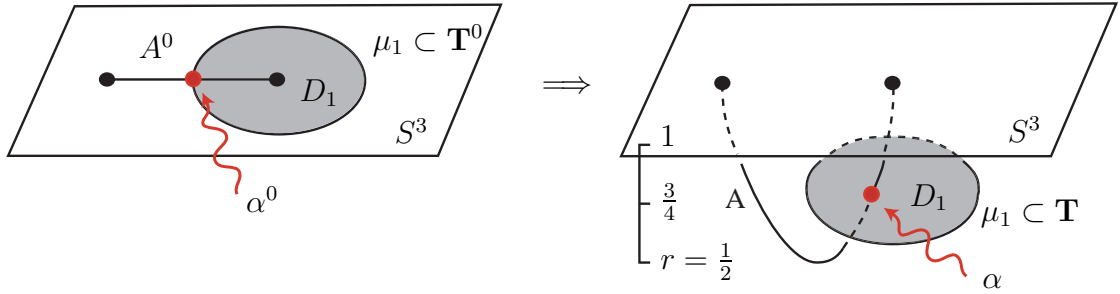}
\caption{Double point torus and rim torus in $B^4$}
\end{figure}

\end{proof}
Following~\cite{fs:addendum}, consider the set $\sT$ of $\spinc$
structures $\tau$ on $X' - (\tSigma \times D^2)$ whose restriction
to $\tSigma \times S^1$ is the pull-back of a $\spinc$ structure on
$\tSigma$ and which satisfies $\langle c_1(\tau), \tSigma\rangle =
\pm (2g(\tSigma)-2)$.  For any such $\spinc$ structure,
Kronheimer--Mrowka~\cite[Definition 3.6.4]{kronheimer-mrowka:monopole} define a
relative Seiberg--Witten invariant $\SW^\tau_{X',\tSigma}\in
\widehat{HM}(\tSigma \times S^1, \tau)$. This group is the monopole
Floer homology, and for $\tau$ with the properties described above
it is isomorphic to $\Z$.  Define $\ssw^\sT_{X,\Sigma}$ to be the
Laurent polynomial (cf.~\cite{fs:knots}) version of this relative
Seiberg--Witten invariant of $(X',\tSigma)$.

Parallel to the main theorem of~\cite{fs:surfaces}, as amended in~\cite{fs:addendum}, we have:
\begin{theorem}\label{T:smoothclass}  Suppose that $(X,\Sigma)$ is a configuration with $\Sigma_1 \cdot \Sigma_2 \neq 0$ and at least two intersections between $\Sigma_1$ and $\Sigma_2$, and let $\tau$ be a $\spinc$ structure such that $\ssw^\sT_{X,\Sigma} \neq 0$.  If $K_1$ and $K_2$ are two knots in $S^3$ and
if there is a diffeomorphism of pairs $f \co (X,\Sigma_{K_1}) \to (X,\Sigma_{K_2})$, then the set of coefficients (with
multiplicities) of $\Delta_{K_1}$ must be equal to that of $\Delta_{K_2}$.
\end{theorem}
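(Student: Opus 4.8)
The plan is to reduce the statement about the configuration $(X,\Sigma_K)$ to the Fintushel--Stern--style statement about the smoothed surface $(X',\tSigma_K)$, and then to invoke the relative Seiberg--Witten invariant $\ssw^\sT$ as a diffeomorphism invariant. The first step is to translate a double point surgery on the configuration into a rim surgery on the smoothing. By Lemma~\ref{L:double=rim}, at each double point of $\Sigma$ we may choose the smoothing so that the double point torus $\DT$ becomes precisely the rim torus of an essential curve $\alpha$ in the smoothing annulus $A \subset \Sigma'$. Consequently, a $k$-twisted double point surgery on $(X,\Sigma)$ along $\DT$ is, after smoothing, a (twisted) rim surgery on $(X,\Sigma')$ along $\alpha$; and since $\tSigma$ is obtained from $\Sigma'$ by the $n$ blow-ups (on points disjoint from the surgery region), the same surgery turns $(X',\tSigma)$ into $(X',\tSigma_K)$. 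Crucially, because $\Sigma_1 \cdot \Sigma_2 \neq 0$, Lemma~\ref{L:smoothing} tells us that the smoothing operation $(X,\Sigma_K) \mapsto (X',\tSigma_K)$ is canonical up to diffeomorphism, so a diffeomorphism of pairs $f \co (X,\Sigma_{K_1}) \to (X,\Sigma_{K_2})$ induces a diffeomorphism $f' \co (X',\tSigma_{K_1}) \to (X',\tSigma_{K_2})$.

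The second step is the behavior of $\ssw^\sT$ under rim surgery. Following the computation of~\cite{fs:surfaces,fs:addendum}, rim surgery along $\alpha$ using the knot $K$ multiplies the relevant coefficient of $\ssw^\sT_{X,\Sigma}$ (for the $\spinc$ structures $\tau \in \sT$ that restrict correctly on $\tSigma \times S^1$) by the Alexander polynomial $\Delta_K(t)$, where $t$ records the class in $H_1$ dual to $\tSigma$; more precisely, for each double point at which we surger, $\ssw^\sT_{X',\tSigma_K} = \Delta_K(t)\cdot \ssw^\sT_{X',\tSigma}$, using the fact that $\alpha$ is homologically essential in $X' - \tSigma$ because there are at least two intersection points of $\Sigma_1$ and $\Sigma_2$ (so the rim torus survives the identification of meridians). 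The twisting parameter $k$ affects only the gluing, not the resulting invariant, as in~\cite{kim:surfaces}. Hence $\ssw^\sT_{X',\tSigma_{K_i}} = \Delta_{K_i}(t)\cdot \ssw^\sT_{X',\tSigma}$, and since $\ssw^\sT_{X,\Sigma} = \ssw^\sT_{X',\tSigma} \neq 0$ by hypothesis, the two invariants differ exactly by the factors $\Delta_{K_1}$ and $\Delta_{K_2}$.

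The final step is to observe that the relative Seiberg--Witten invariant is a diffeomorphism invariant of the pair up to the natural change of variables induced on the monopole Floer homology of $\tSigma \times S^1$; an orientation-preserving diffeomorphism $f'$ carries $\ssw^\sT_{X',\tSigma_{K_1}}$ to $\ssw^{\sT}_{X',\tSigma_{K_2}}$ up to a monomial substitution $t \mapsto t^{\pm 1}$ (coming from the action on $H_1$). Comparing the two expressions $\Delta_{K_i}(t)\cdot \ssw^\sT_{X',\tSigma}$ and cancelling the common nonzero factor $\ssw^\sT_{X',\tSigma}$ forces $\Delta_{K_1}(t)$ and $\Delta_{K_2}(t)$ to agree up to $t \mapsto t^{\pm 1}$, which leaves the multiset of coefficients unchanged.

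I expect the main obstacle to be the second step: carefully justifying that the relative Seiberg--Witten (monopole Floer) invariant transforms under double point surgery exactly by multiplication by $\Delta_K$, in the relative setting of Kronheimer--Mrowka, and that the hypotheses ($\Sigma_1\cdot\Sigma_2\neq 0$ and at least two intersection points) are precisely what is needed to guarantee that the rim torus is non-trivial in homology so that the knot surgery formula applies non-degenerately. This requires combining Lemma~\ref{L:double=rim} with the gluing formula for $\widehat{HM}$ and tracking the identification of the relevant $\spinc$ structures $\tau \in \sT$ through the surgery, much as in~\cite{fs:addendum}.
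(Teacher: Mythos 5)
Your proposal follows essentially the same route as the paper: smooth the configuration (canonically, via Lemma~\ref{L:smoothing} and the hypothesis $\Sigma_1\cdot\Sigma_2\neq 0$), use Lemma~\ref{L:double=rim} to convert the double point surgery into a rim surgery on $\tSigma$ along $\alpha$, apply the Fintushel--Stern knot surgery formula (extended to the $k$-twisted case as in~\cite{kim:surfaces}) with the two-intersection hypothesis guaranteeing that $\alpha$ is non-separating on $\tSigma$, and conclude that the coefficient sets agree. The only cosmetic discrepancy is the variable in the formula: the paper writes the multiplication as $\Delta_K(r^2)$ with $r$ the rim-torus class in $H_2(X-\tSigma)$, rather than a class dual to $\tSigma$.
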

\begin{proof}
The effect of $k$-twisted rim surgery on the relative invariant of an embedded surface is described in~\cite{fs:surfaces,fs:addendum}; the result is based on the formula~\cite[Theorem 1.5]{fs:knots} that describes the change in Seiberg-Witten invariants under knot surgery.   As remarked in~\cite{kim:surfaces}, this proof of this formula extends without change to the setting of $k$-twisted knot surgery.  This can be seen in various ways; the simplest is to use the skein-theoretic 
proof of~\cite[Theorem 1.5]{fs:knots}, and note that for the trivial knot, there is no difference between ordinary knot surgery and $k$-twisted knot surgery.

From Lemma~\ref{L:double=rim}, double point surgery $(X,\Sigma) \leadsto (X,\Sigma_{K,\phi})$ becomes a rim surgery $(X',\tSigma) \leadsto (X',\tSigma_{K,\phi})$ along the curve $\alpha \subset \tSigma$ created by smoothing the double point. If there is more than one intersection point, then $\alpha$ is non-separating on $\tSigma$, and hence (quoting~\cite{fs:addendum})
\[
\ssw^\sT_{X,\Sigma_{K,\phi}} = \ssw^\sT_{X',\tSigma_{K,\phi}} = \ssw^\sT_{X',\tSigma} \cdot\Delta_K(r^2) =
 \ssw^\sT_{X,\Sigma} \cdot\Delta_K(r^2)
\]
where $r$ is the homology class of the rim torus in $H_2(X - \tSigma)$.
The theorem follows as in~\cite{fs:addendum}.
\end{proof}
Suppose that $\Sigma$ is a configuration in a symplectic manifold
$X$.  We say that $\Sigma$ is {\em symplectic} if all of its
components are symplectically immersed surfaces, and all of the
double points are positive. Assuming that there are some intersection points, Lemma~\ref{L:smoothing} implies that the smoothing $\tSigma$ is canonically associated to $\Sigma$.  Then
by~\cite{gompf:symplectic,mccarthy-wolfson:blowup} the smoothing of
a symplectic configuration $(X,\Sigma)$ is a symplectic surface, and
we may apply Theorem~\ref{T:smoothclass}.  It is important to remark
that if $(X,\Sigma)$ is a symplectic configuration with only
positive double points, then there is a $\spinc$ structure $\tau$
with $\SW^\tau_{X,\Sigma} \neq 0$.    As
in~\cite{fs:surfaces,fs:addendum} this follows
from~\cite{taubes:sw-symplectic} by taking a fiber sum with an
appropriate K\"ahler manifold and then applying the gluing theory
of~\cite{kronheimer-mrowka:monopole}.

\subsection{Infinitely many distinct configurations}
We now have assembled the ingredients to prove the first of our main theorems.
\begin{proof}[of Theorem~\ref{t:new-config}]
Let $(X,\Sigma)$ be one of the configurations constructed in examples~\ref{E:nodal}, ~\ref{E:rational}, or  Proposition~\ref{P:tori}.  These are all symplectic configurations, and each of them can be used to prove the theorem, with slightly different arguments.  Let $K_r$, $r\in \N$, be a sequence of knots for which the Alexander polynomials have sets of coefficients that are pairwise distinct.

For each $r$, define $(X,\Sigma^{(r)})$ to be the $k$-twisted double point surgery $(X,\Sigma_{(K_r,A(k))})$ on $(X,K)$, where $k$ and the other parameters are chosen in the three cases as follows.
\begin{enumerate}
\renewcommand{\labelenumi}{(\roman{enumi})}
\item For $\Sigma$ as in Example~\ref{E:nodal}, choose $d_1 = 1$, and $k=0$.
\item For $\Sigma$ as in Example~\ref{E:rational}, choose $k = \pm 1$, and $p, q$ so that $q \geq 2$ and $(p \pm 1,q) =1$.
\item For $\Sigma$ as in Proposition~\ref{P:tori}, again choose $k = \pm 1$, and $m,n$ relatively prime.
\end{enumerate}
In each case Proposition~\ref{P:fundamentalgp} implies that the double point surgery preserves the fundamental group.

According to Theorem~\ref{T:smoothclass}, the configurations
$(X,\Sigma^{(r)})$ are smoothly pairwise distinct,
verifying~\ref{t:smooth} of the theorem.  To
verify~\ref{t:sigma1}, recall from Corollary~\ref{embedding} that
component $1$ of $\Sigma_{(K_r,A(k))}$ is embedded in $X$ as
$\Sigma_1\sharp A(K_r,k)$.   In case (i) above, $\Sigma_1$ is a copy
of $\CP^1 \subset \CP^2$.  According to the thesis of
P.~Melvin~\cite{melvin:thesis}, for $J$ a knotted $2$-sphere,
$(\CP^2, \CP^1 \sharp J)$ is equivalent to $(\CP^2,\CP^1)$ if and
only if the Gluck twist~\cite{gluck:twist} on $J$ yields $S^4$.  On the other hand, Gluck has shown~\cite{gluck:twist}
that this is the case for any $0$-twist spun knot.  It follows that
$\Sigma_1 \sharp A(K_r,0)$ is smoothly equivalent to $\Sigma_1$,
verifying~\ref{t:sigma1} of the theorem in the first case.  The
other two cases are easier, since by Zeeman's
theorem~\cite{zeeman:twist}, the knot $A(K_r,\pm 1)$ is the unknot
in $S^4$.   Item~\ref{t:sigma2} holds by construction; the
individual components $(X,\Sigma_2^{(r)})$ are simply equal to
$(X,\Sigma_2)$.     Finally, Theorem~\ref{T:topclass} implies that
$(X,\Sigma^{(r)})$ is topologically equivalent to $(X,\Sigma)$ for
all $r$, so that the last clause~\ref{t:top} of the theorem holds
as well.
\end{proof}
\section{Group Actions}\label{S:actions}
As shown by Fintushel-Stern-Sunukjian~\cite{fintushel-stern-sunukjian:actions}, twisted rim surgery, applied to the branch set of a cyclic branched cover, can give rise to smoothly exotic group actions on simply--connected $4$--manifolds.   The action (on the total space of the branched cover) is by definition semi-free.  In this section, we show that double point surgery gives rise to interesting group actions with somewhat more complicated singular sets consisting of a configuration of surfaces.  The covering group will be of the form $\zm \oplus \zn$, with the fixed points of the $\zm$ subgroup being one component of the configuration, and those of the $\zn$ subgroup being another component.

The local picture at an intersection point in the configuration may be described as follows; compare~\cite{montesinos:twins.I}. Let $\zeta_k$ denote a primitive $k^{th}$ root of unity.  Then $\zm \oplus \zn$ acts on $\C^2$ with $\zm$ acting on the first $\C$ factor by $\zeta_m$, and $\zn$ acting on the second factor by $\zeta_n$.  The action restricts to an action on the unit ball in $\C^2$, with singular set the unit discs in the two factors, and with fixed point set the origin.  The quotient is also a $4$-ball, with the image of the singular set again a pair of standardly embedded discs meeting at the origin.  We will refer to the image of the singular set in the quotient as the branch set, and refer to any $\zm \oplus \zn$ action with this local structure at fixed points as having standard type.

Conversely, suppose that we have a configuration of surfaces $\Sigma$ in $X$ and an epimorphism
$\phi: \pi_1(X - \Sigma) \to \zm \oplus \zn$ with the property that each intersection point,
$\phi$ takes the meridian of one of the discs to a generator of $\zm$ and the other meridian to a generator
of $\zn$.  Then we can form a regular branched covering $\widetilde{X} \to X$ with covering group $\zm \oplus \zn$.  The branch set is $\Sigma$, written as a union $\Sigma(m) \cup \Sigma(n)$ (note that these may themselves be unions of components), and the local model near each intersection point of $\Sigma(m)$ and $\Sigma(n)$ will be as described in the previous paragraph.

Applying a double point surgery to an appropriately chosen configuration, we will show the following.
\begin{theorem}\label{T:exotic-action}
Let $m$ and $n$ be relatively prime. There is a simply--connected $4$--manifold
$\widetilde{X}$ supporting infinitely many $\zm \oplus \zn$-actions of
standard type that are smoothly inequivalent, but topologically
equivalent.  
\end{theorem}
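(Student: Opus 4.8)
The strategy is to construct $\widetilde X$ as the $\zm \oplus \zn$-branched cover of a fixed simply-connected $4$--manifold $X$ along the symplectic configuration $\Tmn = T_m \cup T_n$ produced in Proposition~\ref{P:tori}, and then to vary the branch set by $k$-twisted double point surgery with a sequence of knots $K_r$. Concretely, since $\pi_1(X - \Tmn) \cong \zm \oplus \zn$ with $\mu_{T_m}$ generating $\zm$ and $\mu_{T_n}$ generating $\zn$ (Proposition~\ref{P:tori}), the identity epimorphism $\phi$ satisfies the hypothesis in the paragraph preceding the theorem — at each intersection point one meridian maps to a generator of $\zm$ and the other to a generator of $\zn$ — so we obtain a $\zm\oplus\zn$-action of standard type on the branched cover $\widetilde X \to X$. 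Fix $k = \pm 1$ with $(m,kn)=1$ (automatic since $(m,n)=1$), and let $K_r$, $r \in \N$, be knots whose Alexander polynomials have pairwise distinct coefficient sets. Set $\widetilde X^{(r)} \to X$ to be the branched cover along the surgered configuration $(X, \Tmn^{(r)}) = (X, (\Tmn)_{(K_r, A(k))})$.

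\textbf{Key steps.} First I would check that the branched cover is well-defined and independent of $r$ as a smooth manifold of the total space at the level of diffeomorphism type is \emph{not} claimed — rather the point is the opposite: the actions on a \emph{fixed} $\widetilde X$ are exotic. So the structure is: (1) By case~\ref{F3} of Proposition~\ref{P:fundamentalgp}, each surgery preserves $\pi_1(X - \Tmn) \cong \zm \oplus \zn$, so each $(X, \Tmn^{(r)})$ admits a branched cover $\widetilde X^{(r)}$ with a standard-type $\zm\oplus\zn$-action; one must note $\widetilde X^{(r)}$ is simply connected, which follows from a Van Kampen / transfer argument using that the deck group acts freely away from the branch locus and $\pi_1(X-\Tmn)$ surjects onto it. (2) By Theorem~\ref{T:topclass}, case~\ref{F3}, the configurations $(X,\Tmn^{(r)})$ are all topologically isotopic to $(X,\Tmn)$; lifting this isotopy (it is supported away from and compatibly near the branch set, since double point surgery is local and the local model of the cover is fixed) gives an equivariant homeomorphism $\widetilde X^{(r)} \to \widetilde X^{(s)}$ conjugating the actions. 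In particular all the $\widetilde X^{(r)}$ are homeomorphic to a single $\widetilde X$ and the actions are topologically equivalent. (3) To see the actions are smoothly inequivalent, observe that an equivariant diffeomorphism would descend to a diffeomorphism of pairs $(X, \Tmn^{(r)}) \to (X, \Tmn^{(s)})$ preserving the branch-set decomposition; since $\Tmn$ is a symplectic configuration with $T_m \cdot T_n \neq 0$ and at least two intersection points (arrange $m,n \geq 2$, or take a configuration with several double points), Theorem~\ref{T:smoothclass} forces the coefficient sets of $\Delta_{K_r}$ and $\Delta_{K_s}$ to agree, contradicting the choice of the $K_r$ for $r \neq s$. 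This yields infinitely many smoothly inequivalent actions.

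\textbf{Main obstacle.} The delicate point is step~(3): ruling out \emph{all} equivariant diffeomorphisms, not merely those respecting the labelling $\Sigma(m) \leftrightarrow \Sigma(n)$. Since $(m,n)=1$ the isotropy subgroups $\zm$ and $\zn$ are intrinsically distinguished (they are the unique subgroups of their orders), so any equivariant diffeomorphism must carry $\mathrm{Fix}(\zm)$ to $\mathrm{Fix}(\zm)$ and $\mathrm{Fix}(\zn)$ to $\mathrm{Fix}(\zn)$, hence descends to a diffeomorphism of $(X,\Tmn^{(r)})$ preserving each component — exactly what Theorem~\ref{T:smoothclass} needs. One should also confirm orientations behave: an equivariant diffeomorphism may a priori reverse orientation on the total space, but composing with a deck transformation if necessary, or appealing to the fact that $b_2^+(X)$ is large enough that $X$ admits no orientation-reversing self-diffeomorphism, handles this; alternatively the relative Seiberg–Witten argument in Theorem~\ref{T:smoothclass} already accounts for the $\pm$ ambiguity. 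A second technical matter is verifying that the branch cover of the \emph{smoothed} surface $\widetilde\Sigma$ interacts correctly with Theorem~\ref{T:smoothclass} — but Theorem~\ref{T:smoothclass} is a statement downstairs about $(X,\Tmn^{(r)})$ and makes no reference to the cover, so this is not actually needed. The remaining steps are routine given the earlier results.
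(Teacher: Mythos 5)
Your outline reproduces the paper's framework up to a point (branched cover of the configuration from Proposition~\ref{P:tori}, surgery with knots $K_r$ and $k=\pm1$, smooth distinction of the actions by descending an equivariant diffeomorphism to a diffeomorphism of the pairs $(X,\Tmn^{(r)})$ and invoking Theorem~\ref{T:smoothclass}, topological equivalence from Theorem~\ref{T:topclass}), and your observation that coprimality of $m$ and $n$ forces any equivariant diffeomorphism to preserve the two fixed-point components is a legitimate point that the paper leaves implicit. But there is a genuine gap at the place you explicitly wave away: the theorem asserts that a \emph{single} smooth manifold $\wtX$ supports infinitely many smoothly inequivalent actions, so you must prove that the total spaces $\wtX^{(r)}$ of the branched covers over the surgered configurations are all \emph{diffeomorphic} to one fixed $\wtX$. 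Your step (2) only yields homeomorphisms (by lifting the topological isotopy), and your prefatory sentence declares that independence of the diffeomorphism type ``is not claimed.'' Without it, what you produce is a family of pairwise homeomorphic manifolds each carrying one action, with the pairs (manifold, action) pairwise smoothly distinct --- which could simply reflect exotic smooth structures on the total spaces rather than exotic actions on one manifold, a weaker and different conclusion.

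This missing step is in fact the bulk of the paper's proof. There the cover is factored as $\wtX \to \wtX^n \to X$; since the meridian $\mu_K$ maps trivially to $\zn$, the $k$-twisted double point surgery downstairs lifts to a $k$-twisted double point surgery on $(\wtX^n, p_n^{-1}(\Sigma))$ along the lifted double point torus, so by Corollary~\ref{embedding} the lifted branch set is $p_n^{-1}(\Sigma_1)\sharp A(K,k)\,\cup\, p_n^{-1}(\Sigma_2)$. The $m$-fold cover of $\wtX^n_K$ then splits as the connected sum of the unsurgered cover with the $m$-fold branched cover of $(S^4, A(K,k))$, which is $S^4$ when $(m,k)=1$ by Plotnick's Corollary 6.1~\cite{plotnick:fibered}; hence $\wtX_K \cong \wtX$ smoothly. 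With your choice $k=\pm1$ one could instead quote Zeeman's theorem~\cite{zeeman:twist} that $A(K,\pm1)$ is unknotted, but some argument of this kind --- analyzing how the surgery lifts through the two-stage cover and identifying the branched cover of the twist-spun piece --- must be supplied, and it is absent from your proposal.
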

\begin{proof}
We have seen in Proposition~\ref{P:tori} that there is a two-component configuration $\Sigma$ of surfaces in a simply--connected manifold $X$ such that $\pi_1(X-\Sigma)$ is isomorphic to $\zm \oplus \zn$ in such a way that the meridian of $\Sigma_1$ is a generator of
$\zm$ and the meridian of $\Sigma_2$ is a generator of $\zn$.
Consider the regular branched $\zm \oplus \zn$-cover $\widetilde{X}$ of $X$ over
$\Sigma$ with the properties that the branch set equals $\Sigma_1 \cup
\Sigma_2$, the branching index along $\Sigma_1$ is $m$ and the branching index along $\Sigma_2$
is $n$. Observe that the map $\widetilde{X} \to X$ may be constructed in two stages, as a composition of branched coverings $p_n\circ q_m$ as indicated below.
\begin{equation}\label{br-diagram}
\wtX \overset{q_m}{\longrightarrow} \wtX^n \overset{p_n}{\longrightarrow}  X
\end{equation}
The $\zn$-cover $p_n:\wtX^{n} \to X$ branched over $\Sigma_2$
corresponds to the epimorphism $\pi_1(X-\Sigma_2)\to \zn$ that maps
the meridian $\mu_2$ to $1$.  The  $\zm$-cover $q_m: \wtX \to \wtX^n$ is
branched over the preimage $p_n^{-1}(\Sigma_1)$ in  $\wtX^n$, and corresponds to the epimorphism 
$\pi_1(\wtX^n - p_n^{-1}(\Sigma_1))\to \pi_1(X -\Sigma_1) \to \zm$.

Now, we perform a $k$-twisted double point surgery on the
configuration $\Sigma$, giving a new configuration $(X,\Sigma_K)$. Let $\widetilde{X}_K$ be the $\zm \oplus \zn$-cover of $X$ branched over $\Sigma_K$; it is also constructed as a composition of branched covers as in~\eqref{br-diagram}.   Choosing, as in case (iii) of Theorem~\ref{t:new-config}, an infinite collection of knots $K$ so that the configurations $\Sigma_K$ are smoothly distinct, we have that the $\zm \oplus
\zn$ actions on the manifolds  $\widetilde{X}_K$ will be smoothly distinct.  So the proof of Theorem~\ref{T:exotic-action} will be completed by the following:\\[1ex]
{\bf Claim:} If $k$ and $m$ are relatively prime, then $\widetilde{X}_K$ is
diffeomorphic to $\widetilde{X}$.\\[1ex]
{\bf Proof of claim.}
Following the construction above, $\wtX_K$ is the $m$-fold branched cover of $\wtX_K^n$, branched along $p_n^{-1}(\Sigma_{1,K})$.  So our main task is to understand that preimage.  Recall that the $k$-twisted double point surgery is given by
\[
(X,\Sigma_K)=(X,\Sigma)-\DT\times D^2\cup_{\phi} E(K) \times
S^1,
\] 
where the gluing map $\phi$ is given by Equation~\eqref{twistbleptsurgery}.

Viewing $E(K) \times S^1$ as a subset of of $X - \Sigma_K$, note that the map $\pi_1(E(K)
\times S^1) \to\zn$ corresponding to the covering $\wtX^n_K \to X$ takes $[S^1]$ to a generator of $\zn$ and $\mu_K$
to the trivial element. Thus, we obtain $(\wtX_K^{n},p_n^{-1}(\Sigma_{K}))$ from $(\wtX^n,p_n^{-1}(\Sigma))$ by a
double point surgery along the lifted double point torus
$\widetilde\DT=\tilde\mu_1\times \tilde\mu_2$ in $\wtX^n$:
\[
\wtX_K^{n}=\wtX^n-\widetilde\DT\times D^2\cup_{\tilde\phi}E(K) \times
S^1,
\] where 
\[
\tilde\phi(\tilde\mu_1)=\mu_{K},\ 
\tilde\phi(\tilde\mu_2)=[S]^1+k\mu_{K}, \ \mathrm{and}\  \tilde\phi(\p
D^2)=\lambda_{K}.
\]
This operation is exactly the $k$-twisted double
point surgery on the configuration
$(\wtX^n,p_n^{-1}(\Sigma))$. The local nature of double point surgery (cf.~\eqref{dbleptsurgery} and
Proposition~\ref{P:embedd-plotnick}) allow us to write $\wtX_K^{n}$
as follows. Given the matrix $A(k)$ in~\eqref{E:matrix2},
\begin{equation}\label{E:n-bcover}
(\wtX_K^{n},\tSigma_K)=(\wtX^n, p_n^{-1}(\Sigma_1)\cup p_n^{-1}(\Sigma_2))\sharp (S^4, (S_1\cup S_2)_{K, A(k)}).
\end{equation}

Recall that cutting and regluing using the matrix $A(k)$ changes the twin $S_1 \cup S_2$ into the $k$-twist twin $(S_1\cup S_2)_{K,
A(k)}$. Hence by Corollary~\ref{embedding}, one component of $p_n^{-1}(\Sigma)$
is $p_n^{-1}(\Sigma_1)\sharp A(K,k)$ and the other one remains
$p_n^{-1}(\Sigma_2)$.

From this discussion, the $\zm \oplus \zn$-cover $\widetilde{X}_K$ is the $\zm$-cover of $\wtX_K^{n}$ branched over $p_n^{-1}(\Sigma_1)\sharp A(K,k)$.  The branched cover decomposes as the connected sum of the $m$-fold branched cover of $\wtX^n$ along $p_n^{-1}(\Sigma_1)$ and the $m$-fold branched cover of $S^4$ over the $k$-twist spun knot  $A(K,k)$, denoted by $(S^4, A(K,k))^m$.  By Corollary 6.1 in~\cite{plotnick:fibered}, if $m$ and $k$ are relatively prime, then $(S^4, A(K,k))^m$ is smoothly $S^4$. Hence, $\widetilde{X}_K$ is diffeomorphic to $\wtX$.
\end{proof}

 \end{document}